\documentclass{amsart}

\newtheorem{theorem}[equation]{Theorem}
\newtheorem{lemma}[equation]{Lemma}
\newtheorem{corollary}[equation]{Corollary}
\newtheorem{proposition}[equation]{Proposition}

\numberwithin{equation}{section}

\usepackage{amscd,amsmath,amssymb,verbatim}

\begin{document}

\title[On the integrality of factorial ratios and mirror maps]{On the integrality of factorial ratios\\ and mirror maps}
\author{Alan Adolphson}
\address{Department of Mathematics\\
Oklahoma State University\\
Stillwater, Oklahoma 74078}
\email{adolphs@math.okstate.edu}
\author{Steven Sperber}
\address{School of Mathematics\\
University of Minnesota\\
Minneapolis, Minnesota 55455}
\email{sperber@math.umn.edu}
\date{\today}
\keywords{}
\subjclass{}
\begin{abstract}
Landau has characterized the integrality of certain ratios of factorials.  Delaygue has characterized the integrality of the Taylor coefficients of certain mirror maps constructed from series involving those ratios.  Using the $A$-hypergeometric point of view, we express those characterizations in terms of the nonexistence of interior points in multiples of the associated lattice polytope.  
\end{abstract}
\maketitle

\section{Introduction}

Let $c_{js}, d_{ks}\in{\mathbb Z}_{\geq 0}$, $1\leq j\leq J$, $1\leq k\leq K$, $1\leq s\leq r$, and let
\begin{align}
C_j(x_1,\dots,x_r) &= \sum_{s=1}^r c_{js}x_s, \\
D_k(x_1,\dots,x_r) &= \sum_{s=1}^r d_{ks}x_s. 
\end{align}
To avoid trivial cases, we assume that no $C_j$ or $D_k$ is identically zero and that $C_j\neq D_k$ for all $j$ and $k$.  We also assume that for each $s$, some $c_{js}\neq 0$ or some $d_{ks}\neq 0$, i.~e., each variable $x_s$ appears in some $C_j$ or $D_k$ with nonzero coefficient.  We always make the hypothesis that
\begin{equation}
\sum_{j=1}^J C_j(x_1,\dots,x_r) = \sum_{k=1}^K D_k(x_1,\dots,x_r),
\end{equation}
i.~e.,
\begin{equation}
\sum_{j=1}^J c_{js} = \sum_{k=1}^K d_{ks} \quad\text{for $s=1,\dots,r$.}
\end{equation}
We consider the ratios
\begin{equation}
E(m_1,\dots,m_r):=\frac{\prod_{j=1}^J C_j(m_1,\dots,m_r)!}{\prod_{k=1}^K D_k(m_1,\dots,m_r)!}
\end{equation}
for $m_1,\dots,m_r\in{\mathbb Z}_{\geq 0}$ and the series 
\begin{equation}
F(t_1,\dots,t_r) = \sum_{m_1,\dots,m_r = 0}^\infty E(m_1,\dots,m_r)t_1^{m_1}\cdots t_r^{m_r}.
\end{equation}

The series $F(t)$ is $A$-hypergeometric (see Section 3).  It is a problem of some interest to determine when a hypergeometric series has $p$-integral coefficients for a prime number $p$.  Such results were first proved by Dwork\cite{D1,D2}, some later contributions are due to Christol\cite{C} and the authors\cite{AS}.  As Dwork\cite{D1,D3} showed, such integrality results lead to $p$-adic analytic formulas for roots of zeta and $L$-functions over finite fields.  For a more recent example of this phenomenon, see~\cite{AS3}.  Dwork\cite{D2} also showed that under a certain additional condition one can prove the $p$-integrality of the Taylor coefficients of the mirror map associated to the 
$p$-integral hypergeometric series.  

Landau\cite{L} has characterized the integrality of the ratios $E(m)$.
\begin{theorem}
Assume that (1.4) holds.  One has $E(m_1,\dots,m_r)\in{\mathbb Z}$ for all $m_1,\dots,m_r\in{\mathbb Z}_{\geq 0}$ if and only if the step function
\begin{equation}
\Phi(x_1,\dots,x_r):=  \sum_{j=1}^J \lfloor C_j(x_1,\dots,x_r) \rfloor - \sum_{k=1}^K \lfloor D_k(x_1,\dots,x_r)\rfloor
\end{equation}
is $\geq 0$ for all $x_1,\dots,x_r\in [0,1)$.
\end{theorem}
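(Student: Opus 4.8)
I would prove this by computing the $p$-adic valuation of $E(m)$ with Legendre's formula and reducing integrality to a pointwise positivity statement for $\Phi$. Fix a prime $p$. Legendre's formula $v_p(n!)=\sum_{i\ge 1}\lfloor n/p^i\rfloor$ together with the linearity of the forms $C_j,D_k$ gives
\begin{equation*}
v_p\bigl(E(m)\bigr)=\sum_{i=1}^{\infty}\Bigl(\sum_{j=1}^J\lfloor C_j(m)/p^i\rfloor-\sum_{k=1}^K\lfloor D_k(m)/p^i\rfloor\Bigr)=\sum_{i=1}^{\infty}\Phi\bigl(m/p^i\bigr),
\end{equation*}
where $\Phi$ is extended to all of ${\mathbb R}_{\ge 0}^r$ by the formula in (1.7) and we use $C_j(m)/p^i=C_j(m/p^i)$ (the sum is finite, since $\Phi(z)=0$ whenever every $C_j(z),D_k(z)<1$). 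Hypothesis (1.4) shows that $\Phi(x+v)=\Phi(x)$ for $v\in{\mathbb Z}^r$, so $\Phi$ is determined by its restriction to $[0,1)^r$. Since $E(m)\in{\mathbb Z}$ for all $m\in{\mathbb Z}_{\ge 0}^r$ if and only if $v_p(E(m))\ge 0$ for every prime $p$ and every $m$, the displayed identity immediately yields the implication $(\Leftarrow)$: if $\Phi\ge 0$ on $[0,1)^r$, then by periodicity $\Phi\ge 0$ on ${\mathbb R}_{\ge 0}^r$, so every summand $\Phi(m/p^i)$ is $\ge 0$.

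For $(\Rightarrow)$ I argue contrapositively: assume $\Phi(x_0)<0$ for some $x_0\in[0,1)^r$, and produce $m$ and $p$ with $v_p(E(m))<0$. Because $\lfloor\,\cdot\,\rfloor$ is right-continuous, for all sufficiently small $\delta_s>0$ each $C_j$ and each $D_k$ has a constant integer part on the half-open box $B=\prod_{s=1}^r[x_{0,s},x_{0,s}+\delta_s)$, so $\Phi\equiv\Phi(x_0)$ on $B$. The box $B$ has nonempty interior contained in $(0,1)^r$, and only finitely many of the hyperplanes on which some $C_j$ or $D_k$ takes an integer value, together with the coordinate hyperplanes, meet $B$; hence there is a rational point $y_0\in B\cap(0,1)^r$ lying on none of them. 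Then $\Phi(y_0)=\Phi(x_0)<0$, while $C_j(y_0)$ and $D_k(y_0)$ are positive non-integers for all $j,k$.

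Let $N$ be a common denominator of the coordinates of $y_0$. By Dirichlet's theorem choose a prime $p\equiv 1\pmod N$ large enough that $p>\sum_s c_{js}$ and $p>C_j(y_0)/\{C_j(y_0)\}$ for every $j$, and similarly with each $D_k$ in place of $C_j$. Put $b_s=(p-1)y_{0,s}$; since $N\mid p-1$ and $0<y_{0,s}<1$ this is an integer in $\{0,1,\dots,p-1\}$, so $b=(b_1,\dots,b_r)\in{\mathbb Z}_{\ge 0}^r$. For $i\ge 2$ we have $C_j(b)/p^i\le C_j(b)/p^2=(p-1)C_j(y_0)/p^2<(\sum_s c_{js})/p<1$, hence $\lfloor C_j(b)/p^i\rfloor=0$, and likewise for $D_k$, so $\Phi(b/p^i)=0$. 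For $i=1$, $C_j(b)/p=C_j(y_0)-C_j(y_0)/p=\lfloor C_j(y_0)\rfloor+\bigl(\{C_j(y_0)\}-C_j(y_0)/p\bigr)$, and the choice of $p$ forces $0<\{C_j(y_0)\}-C_j(y_0)/p<1$, so $\lfloor C_j(b)/p\rfloor=\lfloor C_j(y_0)\rfloor$; the same holds for each $D_k$. Therefore $v_p(E(b))=\Phi(b/p)=\Phi(y_0)<0$, so $E(b)\notin{\mathbb Z}$, contradicting the hypothesis. This proves $(\Rightarrow)$.

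The delicate step is the passage from the given bad point $x_0$ to the point $y_0$ used to build $m$: one must keep $\Phi$ at the value $\Phi(x_0)<0$ while simultaneously making $y_0$ rational, strictly positive, and off every wall where some $C_j$ or $D_k$ is integral, since otherwise the rounding involved in replacing $y_0$ by $b/p$ can destroy the negativity. The two inputs that make this possible are the right-continuity of $\lfloor\,\cdot\,\rfloor$ — which is exactly why $\Phi$ is constant on the half-open box $B$, not merely on its interior — and Dirichlet's theorem, which supplies primes $p\equiv 1\pmod N$ of arbitrarily large size, so that $b_s=(p-1)y_{0,s}$ is an honest (single-``digit'') integer and all the higher-order terms $\Phi(b/p^i)$, $i\ge 2$, vanish.
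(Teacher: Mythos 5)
Your proposal is correct. Note that the paper itself gives no proof of this statement: it is Landau's criterion, quoted from \cite{L}, so there is no internal argument to compare against. Your argument is essentially the classical one: Legendre's formula and linearity give $v_p(E(m))=\sum_{i\ge 1}\Phi(m/p^i)$ (a finite sum), periodicity of $\Phi$ modulo ${\mathbb Z}^r$ (which is where hypothesis (1.4) enters) reduces positivity of each summand to positivity on $[0,1)^r$, and for the converse your perturbation of $x_0$ to a rational point $y_0$ off the integrality walls, followed by the choice $p\equiv 1\pmod N$ large and $b=(p-1)y_0$, correctly forces $\lfloor C_j(b)/p\rfloor=\lfloor C_j(y_0)\rfloor$, $\lfloor D_k(b)/p\rfloor=\lfloor D_k(y_0)\rfloor$, and the vanishing of all terms with $i\ge 2$, so that $v_p(E(b))=\Phi(y_0)<0$. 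All the side conditions you need (constancy of $\Phi$ on the half-open box by monotonicity of the forms and right-continuity of the floor, $\{C_j(y_0)\}>0$ so that $p>C_j(y_0)/\{C_j(y_0)\}$ makes sense, $b\in{\mathbb Z}_{\ge 0}^r$) do hold under the paper's standing assumptions, so the proof is complete.
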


{\bf Remark.}  Landau's result is valid without assumption (1.4) provided $[0,1)$ is replaced by $[0,1]$ in the statement of Theorem~1.7.  Hypothesis (1.4) implies that $\Phi(x)$ depends only on $x\;({\rm mod}\;{\mathbb Z}^r)$:  for $n\in{\mathbb Z}^r$, $\Phi(x+n) = \Phi(x)$.  This allows us to replace $[0,1]$ by $[0,1)$ in Landau's result.

Landau's result, along with a generalization of Dwork's approach, has been applied recently by Krattenthaler and Rivoal\cite{KR,KR2}, Delaygue\cite{D}, and Delaygue, Rivoal, and Roques\cite{DRR} to prove the integrality of the Taylor coefficients of certain mirror maps.  For example, Delaygue considers the series
\begin{equation}
G_{C_j}(t_1,\dots,t_r) = 
\sum_{\substack{m_1,\dots,m_r = 0\\ C_j(m)\neq 0}}^\infty E(m_1,\dots,m_r)H_{C_j(m)}t_1^{m_1}\cdots t_r^{m_r}
\end{equation}
for $j=1,\dots,J$, and
\begin{equation}
G_{D_k}(t_1,\dots,t_r) = 
\sum_{\substack{m_1,\dots,m_r = 0\\ D_k(m)\neq 0}}^\infty E(m_1,\dots,m_r)H_{D_k(m)}t_1^{m_1}\cdots t_r^{m_r}
\end{equation}
for $k=1,\dots,K$, where $m=(m_1,\dots,m_r)$ and, for a positive integer $M$, $H_M$ is the $M$-th harmonic number: $H_M = \sum_{i=1}^M \frac{1}{i}$.  
Define a subset ${\mathcal D}\subseteq[0,1)^r$ by the condition
\[ {\mathcal D} = \{x\in[0,1)^r\mid \text{$C_j(x)\geq 1$ for some $j$ or $D_k(x)\geq 1$ for some $k$}\}. \]
Note that $\Phi$ vanishes on $[0,1)^r\setminus{\mathcal D}$.  
Delaygue has characterized the integrality of the series $\exp\big(G_{C_j}(t)/F(t)\big)$ and $\exp\big(G_{D_k}(t)/F(t)\big)$.
\begin{theorem}
Assume that (1.4) holds and that the series $F(t)$ has integral coefficients (or equivalently, by Theorem 1.7, that $\Phi(x)\geq 0$ for all $x\in[0,1)^r$).  The series $\exp\big(G_{C_j}(t)/F(t)\big)$ and $\exp\big(G_{D_k}(t)/F(t)\big)$ have integral coefficients for $j=1,\dots,J$ and $k=1,\dots,K$ if and only if $\Phi(x)\geq 1$ for all $x\in{\mathcal D}$.
\end{theorem}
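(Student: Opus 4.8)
The plan is to fix a prime $p$ and determine when all of $\exp(G_{C_j}(t)/F(t))$ and $\exp(G_{D_k}(t)/F(t))$ lie in $\mathbb{Z}_p[[t]]$; since a rational power series has integer coefficients iff it is $p$-integral for every $p$, this is enough. Write $\{z\}\in[0,1)^r$ for the coordinatewise fractional part of $z\in\mathbb{Q}_{\ge0}^r$. The basic tool is the Legendre-type identity, a consequence of (1.4),
\[ v_p\bigl(E(m)\bigr)=\sum_{i\ge1}\Phi\bigl(\{m/p^i\}\bigr)\qquad(m\in\mathbb{Z}_{\ge0}^r), \]
which both yields Theorem~1.7 and shows that our standing hypothesis $\Phi\ge0$ on $[0,1)^r$ forces $F(t)\in 1+t\mathbb{Z}_p[[t]]$, hence $F(t)^p/F(t^p)\in 1+pt\mathbb{Z}_p[[t]]$ by the Dieudonn\'e--Dwork lemma. (For $p=2$ the steps below go through because $v_2(2^n/n!)\ge1$, so $\exp$ still maps $pt\mathbb{Z}_2[[t]]$ bijectively onto $1+pt\mathbb{Z}_2[[t]]$.)

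First I would realize each $G_{C_j}$ and $G_{D_k}$ as a derivative of a one-parameter deformation of the $A$-hypergeometric series $F$. For a formal parameter $\eta$ set
\[ F^{(j)}_\eta(t)=\sum_{m}\Bigl(\textstyle\prod_{\ell=1}^{C_j(m)}(\ell+\eta)\Bigr)\,\frac{\prod_{j'\ne j}C_{j'}(m)!}{\prod_k D_k(m)!}\;t^m, \]
which deforms the linear form $C_j$ to $C_j+\eta$. Then $F^{(j)}_0=F$, and since $\partial_\eta\log\prod_{\ell=1}^{M}(\ell+\eta)\big|_{\eta=0}=H_M$ (with $H_0=0$) one gets $\partial_\eta F^{(j)}_\eta\big|_{\eta=0}=G_{C_j}$; deforming instead $D_k(m)!$ in the denominator gives a series $F^{[k]}_\eta$ with $\partial_\eta F^{[k]}_\eta\big|_{\eta=0}=-G_{D_k}$ (the sign is harmless, since the inverse of an element of $1+t\mathbb{Z}_p[[t]]$ again lies there). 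Thus $G_{C_j}/F=\partial_\eta\log F^{(j)}_\eta\big|_{\eta=0}$, and similarly for $D_k$.

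Next, applying the Dieudonn\'e--Dwork lemma to $\exp(G_{C_j}/F)$, integrality of this exponential is equivalent to
\[ p\,\frac{G_{C_j}(t)}{F(t)}-\frac{G_{C_j}(t^p)}{F(t^p)}\in pt\mathbb{Z}_p[[t]]. \]
Since $p\log F^{(j)}_\eta(t)-\log F^{(j)}_\eta(t^p)=\log\bigl(F(t)^p/F(t^p)\bigr)+\eta\bigl(pG_{C_j}/F-(G_{C_j}/F)(t^p)\bigr)+O(\eta^2)$, the left-hand side above is exactly the coefficient of $\eta$ in $\log\bigl(F^{(j)}_\eta(t)^p/F^{(j)}_\eta(t^p)\bigr)$; its coefficient of $\eta^0$ already lies in $pt\mathbb{Z}_p[[t]]$, by Dieudonn\'e--Dwork for $F$. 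So the theorem reduces to the assertion that this \emph{first-order deformed Dwork congruence} holds for every prime $p$, every $j$, and every $k$ if and only if $\Phi(x)\ge1$ for all $x\in\mathcal{D}$. (Note that the full congruence $\log\bigl(F^{(j)}_\eta(t)^p/F^{(j)}_\eta(t^p)\bigr)\in pt\mathbb{Z}_p[[\eta]][[t]]$ would amount to $F^{(j)}_\eta$ itself being $p$-integral, a condition of the form $\Phi\ge\lfloor C_j\rfloor$ that is far too strong; only the $\eta^0$- and $\eta^1$-coefficients matter.)

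The heart of the proof is this last equivalence, and I expect it to be the main obstacle. One must extend Dwork's congruence analysis for the coefficients of $F$ to the deformed coefficients of $F^{(j)}_\eta$: for $\eta\in\mathbb{Z}_p$ the valuation $v_p\bigl(\prod_{\ell=1}^{C_j(m)}(\ell+\eta)\bigr)$ differs from $v_p(C_j(m)!)$ by a term which, after organizing it through the $p$-adic Gamma function and the estimate $v_p(H_M)\ge-\lfloor\log_p M\rfloor$, is supported precisely on the locus where the form $C_j$ reaches the value $1$. Running the argument of Landau and Dwork in this deformed setting should then show that the displayed congruence holds for all $p$ exactly when $\Phi(x)\ge1$ for every $x\in[0,1)^r$ with $C_j(x)\ge1$, and symmetrically for each $D_k$; the exactness of the Legendre-type identity above is what turns this into an equivalence, since when $\Phi(x_0)=0$ at some $x_0$ with $C_j(x_0)\ge1$ it lets one manufacture an $m$ witnessing failure of the congruence for the prime whose fractional parts $\{m/p^i\}$ realize $x_0$. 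Finally, because $\bigcup_j\{x:C_j(x)\ge1\}\cup\bigcup_k\{x:D_k(x)\ge1\}$ is exactly $\mathcal{D}$, requiring all $J+K$ exponentials to be integral is equivalent to $\Phi\ge1$ on $\mathcal{D}$. A subsidiary point deserving care is why the threshold is exactly $1$: this traces back to $H_1=1$ being the smallest harmonic number that can fail to be $p$-integral, and to $\mathcal{D}$ being the locus where some form reaches $1$ rather than the full support of $\Phi$.
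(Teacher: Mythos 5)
You should first note that the paper does not actually prove this statement: Theorem~1.11 is Delaygue's theorem, quoted from \cite[Theorems 1, 2, and~3]{D}, and the paper's own contribution (Theorem~1.12) is only a reformulation of its hypothesis in terms of lattice points of $(J+r+1)\Delta(A)$. So the relevant comparison is with Delaygue's proof (and its Dwork/Krattenthaler--Rivoal ancestry), and against that standard your proposal is an outline of the known strategy rather than a proof. The pieces you do set up are sound: the identity $v_p(E(m))=\sum_{i\geq 1}\Phi(\{m/p^i\})$ is a correct consequence of Legendre's formula and (1.4); the Dieudonn\'e--Dwork reduction of the integrality of $\exp(G_{C_j}/F)$ to the congruence $p\,G_{C_j}(t)/F(t)-G_{C_j}(t^p)/F(t^p)\in pt\,\mathbb{Z}_p[[t]]$ is the standard first step; and packaging $G_{C_j}$ as $\partial_\eta F^{(j)}_\eta\big|_{\eta=0}$, so that this congruence is the $\eta^1$-coefficient of $\log\bigl(F^{(j)}_\eta(t)^p/F^{(j)}_\eta(t^p)\bigr)$, is a tidy bookkeeping device.

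The genuine gap is that the entire content of the theorem lies in the equivalence you defer to ``running the argument of Landau and Dwork in this deformed setting,'' and you give no argument for it. The sufficiency direction ($\Phi\geq 1$ on $\mathcal{D}$ implies the first-order congruence for every $p$) is not a matter of a valuation bound such as $v_p(H_M)\geq -\lfloor\log_p M\rfloor$ applied termwise; the numerators $E(m)H_{C_j(m)}$ are individually far from satisfying the needed divisibility, and one must prove a harmonic-weighted analogue of Dwork's formal congruences for the ratios $E(a+p m)/E(m)$ (this is exactly the technical core of \cite{D} and \cite{KR,KR2}, occupying most of those papers), including the cancellation between the $G$-terms and the $F$-terms in $pG/F-G(t^p)/F(t^p)$. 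Your remark that the relevant discrepancy ``is supported precisely on the locus where the form $C_j$ reaches the value $1$'' is an expectation, not a demonstration, and the necessity direction (from a point $x_0\in\mathcal{D}$ with $\Phi(x_0)=0$, manufacture a prime $p$ and an index $m$ at which the congruence, hence integrality, fails) is likewise only asserted; it requires an explicit construction controlling which coefficient of $\exp(G/F)$ picks up a denominator. As written, the proposal reduces Theorem~1.11 to a statement essentially equivalent to Theorem~1.11 and stops there, so it cannot be accepted as a proof; if your goal is only to use the statement, the honest route is the paper's own, namely citing \cite{D}.
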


{\bf Remark.}  Delaygue's results\cite[Theorems 1, 2, and~3]{D} are more general than Theorem~1.11.  We have abridged them here because of our interest in the series (1.9) and (1.10), which we discuss in Section 4.

We describe a polytope that encapsulates the conditions of Theorems~1.7 and~1.11.  
Put $n= r+J+K$.  Let ${\bf a}_1,\dots,{\bf a}_n$ be the standard unit basis vectors in ${\mathbb R}^n$ and for $s=1,\dots,r$ let
\[ {\bf a}_{n+s} = (0,\dots,0,1,0,\dots,0,c_{1s},\dots,c_{Js},-d_{1s},\dots,-d_{Ks}), \]
where the first $r$ coordinates have a $1$ in the $s$-th position and zeros elsewhere.  Our hypothesis that some $c_{js}$ or some $d_{ks}$ is nonzero implies that ${\bf a}_1,\dots,{\bf a}_{n+r}$ are all distinct.  Put $N=n+r$ and let $A = \{{\bf a}_i\}_{i=1}^{N}\subseteq {\mathbb Z}^n$.   Let $\Delta(A)$ be the convex hull of $A\cup\{{\bf 0}\}$.  

Under Hypothesis (1.4) the elements of the set $A$ all lie on the hyperplane $\sum_{i=1}^n u_i =1$ in ${\mathbb R}^n$,
which implies that the corresponding $A$-hypergeometric system is regular holonomic (Hotta\cite[Section 6]{H}, see also Saito-Sturmfels-Takayama\cite[Theorem 2.4.9]{SST}). 

For $z\in{\mathbb R}_{\geq 0}$, let
\[ z\Delta(A) = \{ (zu_1,\dots,zu_n)\mid (u_1,\dots,u_n)\in\Delta(A)\}. \]
The characterizations in Theorems 1.7 and 1.11 can be expressed in terms of the polytope $\Delta(A)$.  
\begin{theorem}
Assume that (1.4) holds. \\
{\bf (a)}  One has $\Phi(x)\geq 0$ for all $x\in[0,1)^r$ if and only if $(J+r)\Delta(A)$ contains no interior lattice points. \\
{\bf (b)}  One has $\Phi(x)\geq 1$ for all $x\in{\mathcal D}$ if and only if $\sum_{i=1}^{J+r} {\bf a}_i$ is the unique interior lattice point of $(J+r+1)\Delta(A)$.
\end{theorem}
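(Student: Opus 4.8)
\textit{Proposal.} The plan is to reduce both parts to an explicit description of the lattice points of the dilates $z\Delta(A)$ for $z\in\mathbb{Z}_{>0}$. Write a point of $z\Delta(A)$ as $\mathbf{u}=\sum_{i=1}^N\lambda_i\mathbf{a}_i$ with $\lambda_i\ge0$ and $\sum_{i=1}^N\lambda_i\le z$, and put $x_s=\lambda_{n+s}$ for $s=1,\dots,r$. From the shape of the vectors $\mathbf{a}_i$ one reads off
\[\lambda_s=u_s-x_s,\qquad \lambda_{r+j}=u_{r+j}-C_j(x),\qquad \lambda_{r+J+k}=u_{r+J+k}+D_k(x),\]
and --- this is where (1.4) enters --- that $\sum_{i=1}^N\lambda_i=\sum_{i=1}^n u_i$ whatever $x$ is. Hence, for $\mathbf{u}\in\mathbb{Z}^n$, one has $\mathbf{u}\in z\Delta(A)$ iff $\sum_{i=1}^n u_i\le z$ and there is some $x\in\mathbb{R}_{\ge0}^r$ with $x_s\le u_s$, $C_j(x)\le u_{r+j}$, $D_k(x)\ge-u_{r+J+k}$ for all $s,j,k$; and, since $\Delta(A)$ is full-dimensional and a point lies in its interior iff it is a convex combination of $A\cup\{\mathbf{0}\}$ with all coefficients strictly positive, one has $\mathbf{u}\in\operatorname{int}(z\Delta(A))$ iff $\sum_{i=1}^n u_i<z$ and there is some $x\in\mathbb{R}_{>0}^r$ with the three inequalities strict.

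Since the $u_i$ are integers, $x_s<u_s$, $C_j(x)<u_{r+j}$, $D_k(x)>-u_{r+J+k}$ force $u_s\ge\lfloor x_s\rfloor+1$, $u_{r+j}\ge\lfloor C_j(x)\rfloor+1$, $u_{r+J+k}\ge-\lfloor D_k(x)\rfloor$, whence
\[\textstyle\sum_{i=1}^n u_i\ \ge\ \sum_{s=1}^r\lfloor x_s\rfloor+r+J+\Phi(x).\]
For (a), take $z=J+r$. If $(J+r)\Delta(A)$ has an interior lattice point, then $\sum_s\lfloor x_s\rfloor+\Phi(x)<0$ for the associated $x$, so $\Phi(x)<0$, and by periodicity of $\Phi$ modulo $\mathbb{Z}^r$ the value $\Phi(\{x_1\},\dots,\{x_r\})=\Phi(x)<0$ is attained on $[0,1)^r$. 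Conversely, given $\xi\in[0,1)^r$ with $\Phi(\xi)\le-1$, the perturbation lemma below produces $x\in(0,1)^r$ with $\Phi(x)\le-1$ and no $D_k(x)$ integral; then the point $\mathbf{u}=(1,\dots,1,\lfloor C_1(x)\rfloor+1,\dots,\lfloor C_J(x)\rfloor+1,-\lfloor D_1(x)\rfloor,\dots,-\lfloor D_K(x)\rfloor)$ satisfies the strict inequalities of the first paragraph with this $x$ (note $\lambda_{r+J+k}=\{D_k(x)\}>0$), and $\sum_i u_i=r+J+\Phi(x)<J+r$, so it is an interior lattice point.

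For (b), first note that $\mathbf{p}:=\sum_{i=1}^{J+r}\mathbf{a}_i=(1,\dots,1,0,\dots,0)$ always lies in $\operatorname{int}((J+r+1)\Delta(A))$, as one sees by taking $x=(\epsilon,\dots,\epsilon)$ with $\epsilon>0$ small. Assume now $\Phi\ge0$ everywhere. If $\mathbf{u}$ is any interior lattice point of $(J+r+1)\Delta(A)$, then $\sum_i u_i\le J+r$ together with the displayed inequality forces $\sum_s\lfloor x_s\rfloor+\Phi(x)\le0$; since both summands are $\ge0$ we get $x\in(0,1)^r$, $\Phi(x)=0$, and all of $u_s\ge\lfloor x_s\rfloor+1$, $u_{r+j}\ge\lfloor C_j(x)\rfloor+1$, $u_{r+J+k}\ge-\lfloor D_k(x)\rfloor$ become equalities, so $u_s=1$, $u_{r+j}=\lfloor C_j(x)\rfloor+1$, $u_{r+J+k}=-\lfloor D_k(x)\rfloor$. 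Thus $\mathbf{u}=\mathbf{p}$ precisely when every $C_j(x)<1$ and every $D_k(x)<1$, i.e.\ when $x\notin\mathcal{D}$. Hence if $\Phi\ge1$ on $\mathcal{D}$ then $x\notin\mathcal{D}$ for every such $x$ and $\mathbf{p}$ is the unique interior lattice point; conversely, if some $\xi\in\mathcal{D}$ has $\Phi(\xi)=0$, the perturbation lemma gives $x\in(0,1)^r$ with $\Phi(x)=0$, no $D_k(x)$ integral, and still $x\in\mathcal{D}$ (a short case analysis on the index realizing $\xi\in\mathcal{D}$, using $C_j(v)>0$, $D_k(v)>0$ below), and the associated $\mathbf{u}$ is an interior lattice point different from $\mathbf{p}$.

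The one real obstacle is the perturbation lemma: for $\xi\in[0,1)^r$ there exists $v\in\mathbb{R}^r$ with $v_s>0$ whenever $\xi_s=0$, $C_j(v)>0$ whenever $C_j(\xi)\in\mathbb{Z}$, and $D_k(v)>0$ whenever $D_k(\xi)\in\mathbb{Z}$. Granting this, $x=\xi+\epsilon v$ with $\epsilon>0$ small lies in $(0,1)^r$, has no $C_j(x)$ or $D_k(x)$ integral, and none of the floors $\lfloor C_j(\cdot)\rfloor,\lfloor D_k(\cdot)\rfloor$ changes under the perturbation (each $C_j,D_k$ moves off its integrality value on the increasing side), so $\Phi(x)=\Phi(\xi)$. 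Existence of $v$ is a Gordan-type alternative: a nontrivial nonnegative linear relation among the forms $e_s^{*}$ ($\xi_s=0$), $C_j$ ($C_j(\xi)\in\mathbb{Z}$), $D_k$ ($D_k(\xi)\in\mathbb{Z}$) must vanish coordinate by coordinate because all these forms have nonnegative coefficients, and then forces every coefficient to be $0$ since no $C_j$ or $D_k$ is identically zero; thus the open cone cut out by the strict inequalities is nonempty. Everything else is bookkeeping built on the parametrization and the identity $\sum_{i=1}^N\lambda_i=\sum_{i=1}^n u_i$ supplied by (1.4).
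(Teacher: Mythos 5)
Your argument is correct in substance, and its computational core coincides with the paper's: both parametrize a lattice point as $u=\sum_{i=1}^N\lambda_i{\bf a}_i$ with $x_s=\lambda_{n+s}$, use (1.4) to get $\sum_{i=1}^N\lambda_i=\sum_{i=1}^n u_i$, read off the coordinates via floors, and arrive at the relation $\sum_{i=1}^n u_i\ge J+r+\Phi(x)$ (with equality in the constructions), which is the paper's identity (2.21). Where you genuinely diverge is the certification of interiority. The paper passes to the cone $C(A)$ and proves a sharper criterion (its Lemma 2.5): because of the relation $\sum_{s}{\bf a}_s+\sum_j\bigl(\sum_s c_{js}\bigr){\bf a}_{r+j}=\sum_k\bigl(\sum_s d_{ks}\bigr){\bf a}_{r+J+k}+\sum_s{\bf a}_{n+s}$, strict positivity of the coefficients of ${\bf a}_1,\dots,{\bf a}_{r+J}$ alone forces a point to be interior; hence its converse constructions take an arbitrary $x\in[0,1)^r$ with no perturbation needed even when some $x_s=0$ or some $D_k(x)\in{\mathbb Z}$, while the forward directions are run through an interior lattice point of minimal coordinate sum, which pins the $\lambda_i$ into $(0,1]$ and $[0,1)$. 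You instead use the generic criterion ``interior iff all barycentric coefficients, including that of ${\bf 0}$, are strictly positive,'' which is what forces your perturbation lemma; in exchange your floor inequality applies to every interior lattice point directly, so you avoid the minimality device. Both routes work. Two small points. First, the Gordan-type alternative is unnecessary: $v=(1,\dots,1)$ satisfies your perturbation lemma outright, since every $C_j$ and $D_k$ has nonnegative coefficients and is not identically zero. Second, in part (b) the blanket clause ``assume $\Phi\ge0$ everywhere'' formally leaves the converse untreated when $\Phi$ takes a negative value; this is not a real obstruction, since your own construction closes it at once (for $\xi\in{\mathcal D}$ with $\Phi(\xi)\le 0$ the perturbed $x$ and associated $u$ give $\sum_{i=1}^n u_i=J+r+\Phi(x)<J+r+1$ and $u\ne\sum_{i=1}^{J+r}{\bf a}_i$), or one can simply invoke part (a); but as written that case should be stated explicitly.
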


{\bf Remark.}  The point $\sum_{i=1}^{J+r}{\bf a}_i$ is never an interior lattice point of $(J+r)\Delta(A)$ (it lies on the face contained in the hyperplane $\sum_{i=1}^n u_i=J+r$) but is always an interior lattice point of the polytope $(J+r+1)\Delta(A)$ (see the remark following Lemma~2.5).  Thus $(J+r)\Delta(A)$ is the largest integral multiple of $\Delta(A)$ whose interior can be lattice-point free.  It will follow from part (a) and Theorem~1.7 that if the series~(1.6) has integral coefficients, then $K>J$ (see Corollary~2.22).

Our choice of the set $A$ was motivated by considering the series (1.6).
In \cite[Section 6]{A} the first author gave an algorithm, based on work of Dwork-Loeser\cite{DL}, that associates to a series whose coefficients are products of Pochhammer symbols a set $A$ with the property that the corresponding $A$-hypergeometric system has that series as a solution.  The set $A$ defined above was obtained by applying that algorithm to the series (1.6).  That series arises as a specialization of a solution to the $A$-hypergeometric system with parameter $-\sum_{i=1}^{J+r}{\bf a}_i$.  We discuss the connection with $A$-hyperg\-eometric series in Section 3.

It is well known that the polytope $\Delta(A)$ contains cohomological information about the toric hypersurfaces whose Picard-Fuchs equations are $A$-hypergeometric.  In particular, interior lattice points of multiples of $\Delta(A)$ play a role in describing the mixed Hodge structure on de Rham cohomology (Batyrev\cite{Ba}).  We plan to study the cohomological consequences of the lattice point conditions of Theorem 1.12 in a future article.  

For complete intersections in the torus, we conjectured that uniqueness of a certain interior lattice point implies  integrality of the Taylor coefficients of related mirror maps (\cite[Conjecture~6.3]{AS2}).  We regard the conjunction of Theorem~1.12(b) and Delaygue's results as evidence supporting that conjecture.

\section{Proof of Theorem 1.12}

We begin by expressing the lattice point conditions of Theorem~1.12 in terms of the cone $C(A)\subseteq{\mathbb R}^n$ generated by $A$.  Note that a point $u=(u_1,\dots,u_n)\in C(A)$ lies in $z\Delta(A)$ (where $z\geq 0$) if and only if $\sum_{i=1}^n u_i\leq z$.  The point $u$ is an interior point of $z\Delta(A)$ if and only if it is an interior point of $C(A)$ and $\sum_{i=1}^n u_i<z$.  Theorem~1.12 can therefore be reformulated as follows.
\begin{theorem}
Assume that (1.4) holds.\\
{\bf (a)}  Every interior lattice point $(u_1,\dots,u_n)$ of $C(A)$ satisfies the inequality
\begin{equation}
\sum_{i=1}^n u_i\geq J+r
\end{equation}
if and only if $\Phi(x)\geq 0$ for all $x\in[0,1)^r$.\\
{\bf (b)}  The point $\sum_{i=1}^{J+r} {\bf a}_i = (1,\dots,1,0,\dots,0)$ ($J+r$ ones, followed by $K$ zeros) is the unique interior lattice point of $C(A)$ that satisfies the inequality
\begin{equation}
\sum_{i=1}^n u_i< J+r+1
\end{equation}
if and only if $\Phi(x)\geq 1$ for all $x\in{\mathcal D}$.
\end{theorem}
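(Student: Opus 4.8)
The plan is to make the cone $C(A)$ and its interior completely explicit, using the fact that ${\bf a}_1,\dots,{\bf a}_n$ are the standard basis vectors of ${\mathbb R}^n$. Writing a point of ${\mathbb R}^n$ as $(u',v,w)$ with $u'\in{\mathbb R}^r$, $v\in{\mathbb R}^J$, $w\in{\mathbb R}^K$, one checks at once that $(u',v,w)\in C(A)$ if and only if there is a $\lambda=(\lambda_1,\dots,\lambda_r)\in{\mathbb R}^r_{\geq 0}$ with $u'_s\geq\lambda_s$, $v_j\geq C_j(\lambda)$ and $w_k\geq -D_k(\lambda)$ for all $s,j,k$; and $(u',v,w)$ is an \emph{interior} point of $C(A)$ precisely when such a $\lambda$ can be chosen making all these inequalities strict (this uses that $C(A)$ contains the positive orthant and that subtracting $\sum_s\lambda_s{\bf a}_{n+s}$ is a homeomorphism of ${\mathbb R}^n$). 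Hence, setting
\[ \Psi(\lambda):=\sum_{s=1}^r\lfloor\lambda_s\rfloor+\sum_{j=1}^J\lfloor C_j(\lambda)\rfloor+\sum_{k=1}^K\lfloor -D_k(\lambda)\rfloor\qquad(\lambda\in{\mathbb R}^r_{\geq 0}), \]
the point $u(\lambda)$ with coordinates $\lfloor\lambda_s\rfloor+1$, $\lfloor C_j(\lambda)\rfloor+1$, $\lfloor -D_k(\lambda)\rfloor+1$ is an interior lattice point of $C(A)$ with coordinate sum $(r+J+K)+\Psi(\lambda)$, while every interior lattice point has coordinate sum at least $(r+J+K)+\Psi(\lambda)$ for its witnessing $\lambda$. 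Therefore the minimum of $\sum_{i=1}^n u_i$ over all interior lattice points equals $(r+J+K)+\inf_{\lambda\in{\mathbb R}^r_{\geq 0}}\Psi(\lambda)$.

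Next I would use Hypothesis (1.4), which gives $\Psi(\lambda+n)=\Psi(\lambda)+\sum_s n_s$ for $n\in{\mathbb Z}^r_{\geq 0}$, so that the infimum above equals $\min_{\lambda\in[0,1)^r}\Psi(\lambda)$, a minimum over finitely many values; and on $[0,1)^r$ a short calculation using $\lfloor -D_k(\lambda)\rfloor=-\lceil D_k(\lambda)\rceil$ yields the identity $\Psi(\lambda)+K=\Phi(\lambda)+\#\{k:D_k(\lambda)\in{\mathbb Z}\}$. Consequently the condition of (a), that every interior lattice point have coordinate sum $\geq J+r$, is equivalent to $\Psi(\lambda)\geq -K$ for all $\lambda\in[0,1)^r$, i.e.\ to $\Phi(\lambda)+\#\{k:D_k(\lambda)\in{\mathbb Z}\}\geq 0$ for all such $\lambda$. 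That this is in turn equivalent to $\Phi\geq 0$ on $[0,1)^r$ is immediate in one direction; for the other I would use the key perturbation: for $\lambda_0\in[0,1)^r$ and all sufficiently small $\epsilon>0$, the point $\lambda_0+\epsilon(1,\dots,1)$ again lies in $[0,1)^r$, has the same values of $\lfloor C_j\rfloor$ and $\lfloor D_k\rfloor$ (hence of $\Phi$) as $\lambda_0$, has no $D_k$ taking an integral value, and lies in ${\mathcal D}$ if and only if $\lambda_0$ does; this is a routine check using that each $C_j$ and each $D_k$ has a strictly positive coefficient. Applied to a hypothetical $\lambda_0$ with $\Phi(\lambda_0)<0$, it produces a $\lambda$ violating the condition, which proves (a).

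For (b) I would first observe that either of the two conditions forces $\Phi\geq 0$ on $[0,1)^r$: the lattice-point condition because an interior lattice point of $C(A)$ with coordinate sum $<J+r$ would be an interior lattice point of coordinate sum $<J+r+1$ distinct from $\sum_{i=1}^{J+r}{\bf a}_i=(1,\dots,1,0,\dots,0)$ (which has coordinate sum $J+r$ and, with witness $\lambda=\epsilon(1,\dots,1)$, is itself interior); and the $\Phi$-condition because $\Phi$ vanishes off ${\mathcal D}$. Assuming $\Phi\geq 0$, part (a) shows the minimal coordinate sum of an interior lattice point is $J+r$, so the interior lattice points of coordinate sum $<J+r+1$ are exactly the $u(\lambda)$ with $\lambda\in[0,1)^r$ and $\Psi(\lambda)=-K$; by the identity above (together with $\Phi\geq 0$) such $\lambda$ are exactly those with $\Phi(\lambda)=0$ and no $D_k(\lambda)\in{\mathbb Z}$, and for such $\lambda$ one computes $u(\lambda)=\sum_{i=1}^{J+r}{\bf a}_i$ exactly when $C_j(\lambda)<1$ for all $j$ and $D_k(\lambda)<1$ for all $k$, i.e.\ when $\lambda\notin{\mathcal D}$. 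Hence uniqueness of the interior lattice point is equivalent to the statement that every $\lambda\in[0,1)^r$ with $\Phi(\lambda)=0$ and no integral $D_k(\lambda)$ lies outside ${\mathcal D}$, which by the same perturbation lemma (now applied to a hypothetical $\lambda_0\in{\mathcal D}$ with $\Phi(\lambda_0)=0$) is equivalent to $\Phi\geq 1$ on ${\mathcal D}$. The main obstacle throughout is precisely that the quantity controlling interior lattice points is $\Psi$, not $\Phi$; these differ by $\#\{k:D_k(\lambda)\in{\mathbb Z}\}$, and the perturbation $\lambda_0\mapsto\lambda_0+\epsilon(1,\dots,1)$ — which simultaneously preserves $\Phi$, kills all integrality of the $D_k(\lambda)$, keeps $\lambda$ in $[0,1)^r$, and preserves membership in ${\mathcal D}$ — is exactly what reconciles the two.
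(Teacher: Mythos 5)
Your proposal is correct, and it reaches Theorem 2.1 by a genuinely different route than the paper. The paper's key tool is its Lemma 2.5: because of the relation (2.6), a point $\sum_{i=1}^N z_i{\bf a}_i$ is already interior to $C(A)$ as soon as $z_1,\dots,z_{r+J}>0$, with no condition on $z_{r+J+1},\dots,z_N$. The paper then takes an interior lattice point of minimal coordinate sum, uses minimality to force $z_i\in(0,1]$ for $i\le r+J$ and $z_i\in[0,1)$ for $i>r+J$, reads off the coordinates, and obtains $\sum_i u_i=J+r+\Phi(z_{n+1},\dots,z_{n+r})$ directly, the converse coming from running the construction backwards from an arbitrary point of $[0,1)^r$. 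You instead use the ``all coefficients strictly positive'' characterization of interior points; this is correct, but it replaces $-\lfloor D_k\rfloor$ by $\lfloor -D_k\rfloor=-\lceil D_k\rceil$, so the quantity governing interior lattice points becomes your $\Psi$ rather than $\Phi$, and you must reconcile the two by the perturbation $\lambda\mapsto\lambda+\epsilon(1,\dots,1)$, which preserves $\Phi$ and membership in ${\mathcal D}$ while destroying integrality of the $D_k(\lambda)$. That perturbation is exactly what Lemma 2.5 lets the paper avoid, since the paper's canonical interior point is allowed to have $z_{r+J+k}=0$ when $D_k\in{\mathbb Z}$; what your route buys in exchange is independence from the special relation (2.6): you need only the generic facts that the open orthant lies in the interior of $C(A)$ and that each $C_j$ and $D_k$ has a positive coefficient. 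Two assertions you should flesh out, both true and easily filled: (i) ``interior implies a witness with all inequalities strict'' does not really follow from your homeomorphism remark; it is the paper's Lemma 2.4 (subtract $\epsilon\sum_{i=1}^N{\bf a}_i$, which stays in the cone, and re-expand); (ii) in part (b), to see that every interior lattice point $u$ of coordinate sum $J+r$ equals $u(\lambda)$ for its strict witness $\lambda$, note that $u\ge u(\lambda)$ coordinatewise, while $\Phi\ge 0$ and quasi-periodicity give $\Psi(\lambda)\ge -K$ with equality only possible for $\lambda\in[0,1)^r$, so the two coordinate sums agree and $u=u(\lambda)$.
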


We list some properties of the cone $C(A)$.  The first is very general.
\begin{lemma}
Let $B=\{{\bf b}_1,\dots,{\bf b}_M\}\subseteq{\mathbb R}^n$, let $C(B)\subseteq{\mathbb R}^n$ be the cone generated by~$B$, and let ${\mathcal M}$ be a subset of $\{1,\dots,M\}$.  If $u$ is an interior point of $C(B)$, then there exists a representation
\[ u= \sum_{i=1}^M z_i{\bf b}_i \]
with $z_i\geq 0$ for all $i$ and $z_i>0$ for $i\in {\mathcal M}$.
\end{lemma}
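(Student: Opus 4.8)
The plan is to use nothing beyond the definition of $C(B)$ as the set of nonnegative combinations $\sum_{i=1}^M w_i\mathbf{b}_i$ with $w_i\geq 0$, together with the fact that an interior point $u$ of $C(B)$ has an entire neighborhood contained in $C(B)$. In other words, the idea is simply to perturb a given representation of $u$ by a small multiple of $\sum_{i\in\mathcal{M}}\mathbf{b}_i$.

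First I would set $v=\sum_{i\in\mathcal{M}}\mathbf{b}_i$ and observe that, since $u$ is an interior point of $C(B)$, the perturbed point $u-\epsilon v$ still lies in $C(B)$ for all sufficiently small $\epsilon>0$ (it lies in the given neighborhood of $u$ once $\epsilon\|v\|$ is small enough). Fixing such an $\epsilon$, I would invoke the definition of the cone to write $u-\epsilon v=\sum_{i=1}^M w_i\mathbf{b}_i$ with $w_i\geq 0$ for all $i$. Adding $\epsilon v$ back then yields
\[ u=\sum_{i\notin\mathcal{M}} w_i\mathbf{b}_i+\sum_{i\in\mathcal{M}}(w_i+\epsilon)\mathbf{b}_i, \]
so setting $z_i=w_i$ for $i\notin\mathcal{M}$ and $z_i=w_i+\epsilon$ for $i\in\mathcal{M}$ gives a representation of $u$ with $z_i\geq 0$ for every $i$ and $z_i\geq\epsilon>0$ for $i\in\mathcal{M}$, as required.

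There is no substantive obstacle here; the only point that deserves a word is the interpretation of ``interior point.'' If $C(B)$ fails to be full-dimensional, its topological interior in $\mathbb{R}^n$ is empty and the assertion is vacuous; otherwise ``interior'' is the usual interior in $\mathbb{R}^n$, and the perturbation argument above applies verbatim. In the intended application the cone $C(A)$ contains the standard basis vectors $\mathbf{a}_1,\dots,\mathbf{a}_n$ and is therefore full-dimensional, so this caveat causes no trouble.
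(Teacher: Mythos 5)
Your argument is exactly the paper's: perturb $u$ by $\epsilon\sum_{i\in\mathcal{M}}\mathbf{b}_i$, use interiority to keep the perturbed point in $C(B)$, express it as a nonnegative combination, and add the perturbation back. It is correct and essentially identical to the published proof (your remark on the meaning of ``interior'' is a harmless aside).
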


\begin{proof}
Since $u$ is interior to $C(B)$, we can choose $\epsilon>0$ so that $u-\epsilon\sum_{i\in {\mathcal M}} {\bf b}_i$ lies in~$C(B)$.  There then exist $y_i\geq 0$ such that
\[ u-\epsilon\sum_{i\in {\mathcal M}} {\bf b}_i = \sum_{i=1}^M y_i{\bf b}_i. \]
Solving for $u$ gives the desired representation. 
\end{proof}

\begin{lemma}
Let $u\in C(A)$ and write 
\[ u= \sum_{i=1}^{N} z_i {\bf a}_i \]
with $z_i\geq 0$ for $i=1,\dots,N$.  If $z_i>0$ for $i=1,\dots,r+J$, then $u$ is an interior point of $C(A)$.
\end{lemma}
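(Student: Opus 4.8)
The plan is to show that $u=\sum_{i=1}^N z_i\mathbf{a}_i$ with $z_1,\dots,z_{r+J}>0$ meets the interior of $C(A)$ by exhibiting, for every vector $v\in\mathbb{R}^n$, a short segment $u-\epsilon v\in C(A)$; equivalently, that the $\mathbf{a}_i$ with strictly positive coefficients already span $\mathbb{R}^n$ as a cone up to the freedom provided by the remaining generators. Concretely I would argue that $u$ can be perturbed in every coordinate direction. The first $r$ coordinates are controlled by $\mathbf{a}_{n+1},\dots,\mathbf{a}_{n+r}$: since every variable $x_s$ appears in some $C_j$ or $D_k$, and since $z_{n+1},\dots,z_{n+r}$ need not themselves be positive, I instead use the unit vectors $\mathbf{a}_1=\mathbf{e}_1,\dots,\mathbf{a}_r=\mathbf{e}_r$, whose coefficients $z_1,\dots,z_r$ are assumed positive, to move freely in those $r$ directions. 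Similarly $\mathbf{a}_{r+1},\dots,\mathbf{a}_{r+J}=\mathbf{e}_{r+1},\dots,\mathbf{e}_{r+J}$ have positive coefficients $z_{r+1},\dots,z_{r+J}$, so we can move freely in the next $J$ coordinate directions.

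The remaining $K$ coordinate directions (positions $r+J+1,\dots,r+J+K=n$) are the ones to handle with care, since the corresponding unit vectors $\mathbf{a}_{r+J+1},\dots,\mathbf{a}_n$ may have zero coefficients in the given representation of $u$. Here I would use hypothesis (1.4): for each $k$, $\sum_{j=1}^J c_{js}=\sum_{k'=1}^K d_{k's}$ for all $s$, and more usefully each $D_k$ is not identically zero, so there is an $s$ with $d_{ks}>0$. The vector $\mathbf{a}_{n+s}$ has $-d_{ks}$ in coordinate $r+J+k$; so a positive multiple of $\mathbf{a}_{n+s}$, after correcting its first $r$ coordinates and its $C_j$-coordinates using the freely-available $\mathbf{e}_1,\dots,\mathbf{e}_{r+J}$, lets us decrease coordinate $r+J+k$. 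To increase that coordinate we simply add a multiple of $\mathbf{a}_{r+J+k}=\mathbf{e}_{r+J+k}$ itself. Running this for each $k=1,\dots,K$ shows every coordinate direction is attainable as a small signed perturbation staying inside $C(A)$, which is exactly the statement that $u$ is interior.

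To make this rigorous I would phrase it as: for any $v\in\mathbb{R}^n$ I must find $\epsilon>0$ with $u-\epsilon v\in C(A)$, and by convexity of the cone it suffices to do this for $v$ ranging over a spanning set of $\mathbb{R}^n$, e.g.\ $\pm\mathbf{e}_1,\dots,\pm\mathbf{e}_n$. For $v=\pm\mathbf{e}_\ell$ with $1\leq\ell\leq r+J$, the coefficient $z_\ell>0$ absorbs a small perturbation in either sign. For $v=-\mathbf{e}_\ell$ with $r+J+1\leq\ell=r+J+k\leq n$, add a small multiple of $\mathbf{e}_\ell$. For $v=+\mathbf{e}_\ell$, pick $s$ with $d_{ks}>0$ and note that for small $\delta>0$,
\[
u-\delta\,d_{ks}^{-1}\Bigl(\mathbf{a}_{n+s}-\mathbf{e}_s-\sum_{j=1}^J c_{js}\,\mathbf{e}_{r+j}\Bigr)
= u - \delta\mathbf{e}_\ell + \delta\sum_{k'\neq k} d_{k's}d_{ks}^{-1}\mathbf{e}_{r+J+k'}
\]
differs from $u$ by $-\delta\mathbf{e}_\ell$ plus nonnegative combinations of other unit vectors, and $\mathbf{a}_{n+s}$ enters with a genuinely positive coefficient, so choosing $\delta$ small enough (relative to $z_1,\dots,z_{r+J}$) keeps all coefficients nonnegative. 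The main obstacle, and the only place the hypotheses really bite, is confirming that the $C_j$- and first-$r$-coordinate "debris" created by invoking $\mathbf{a}_{n+s}$ can always be cancelled — this works precisely because $z_1,\dots,z_{r+J}>0$ gives slack in exactly those $r+J$ coordinates, and (1.4) together with $D_k\not\equiv 0$ guarantees the needed index $s$ exists.
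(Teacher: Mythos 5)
Your overall strategy (exhibit, for each of the directions $\pm\mathbf{e}_1,\dots,\pm\mathbf{e}_n$, a small perturbation of $u$ staying in $C(A)$, then use convexity to get a whole neighborhood) is viable and genuinely different from the paper's argument, and you correctly locate where the hypotheses enter. But the key display is wrong, and the error is not merely cosmetic. Since $\mathbf{a}_{n+s}-\mathbf{e}_s-\sum_{j}c_{js}\mathbf{e}_{r+j}=-\sum_{k'}d_{k's}\mathbf{e}_{r+J+k'}$, the point you wrote down equals $u+\delta\mathbf{e}_\ell+\delta\sum_{k'\neq k}d_{k's}d_{ks}^{-1}\mathbf{e}_{r+J+k'}$: it moves the $\ell$-th coordinate \emph{up}, which is trivially possible, not down. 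Moreover, even under the charitable reading in which you have produced a point of the form $u-\delta\mathbf{e}_\ell+(\text{nonnegative combination of the other }\mathbf{e}_{r+J+k'})$, that only makes the direction $\mathbf{e}_\ell-\sum_{k'\neq k}c_{k'}\mathbf{e}_{r+J+k'}$ feasible, and such directions, together with the others you have, need not positively span $\mathbb{R}^n$ (e.g.\ with $K=2$ and equal $d$-columns they only generate a half-space in the last two coordinates), so interiority would not follow. (Also, "spanning set" should be "positively spanning set"; your choice $\pm\mathbf{e}_\ell$ is fine, but the criterion as stated is too weak.)

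The repair is short: with the sign fixed, the debris in the other $D$-coordinates comes out \emph{negative}, and you cancel it by adding back the generators $\mathbf{a}_{r+J+k'}$, which costs nothing. Concretely, for $\ell=r+J+k$ and $s$ with $d_{ks}>0$,
\begin{equation*}
u-\delta\mathbf{e}_\ell \;=\; u+\delta d_{ks}^{-1}\Bigl(\mathbf{a}_{n+s}-\mathbf{a}_s-\sum_{j=1}^J c_{js}\,\mathbf{a}_{r+j}+\sum_{k'\neq k} d_{k's}\,\mathbf{a}_{r+J+k'}\Bigr),
\end{equation*}
and the right-hand side lies in $C(A)$ for small $\delta$ because the only generators whose coefficients decrease are $\mathbf{a}_s$ and $\mathbf{a}_{r+1},\dots,\mathbf{a}_{r+J}$, whose coefficients $z_s,z_{r+j}$ are positive by hypothesis; $\mathbf{a}_{n+s}$ and the $\mathbf{a}_{r+J+k'}$ only gain. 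With this one line your argument closes. For comparison, the paper proves the lemma without coordinates: the relation $\sum_s\mathbf{a}_s+\sum_j(\sum_s c_{js})\mathbf{a}_{r+j}=\sum_k(\sum_s d_{ks})\mathbf{a}_{r+J+k}+\sum_s\mathbf{a}_{n+s}$, in which every generator occurs with positive coefficient, shows that $\mathbf{a}_1,\dots,\mathbf{a}_{r+J}$ cannot all lie on a codimension-one face of $C(A)$, so any $u$ whose representation has $z_1,\dots,z_{r+J}>0$ lies on no such face and is therefore interior; your computation is essentially the coordinatewise shadow of that same relation, at the price of the bookkeeping that tripped you up.
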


\begin{proof}
We have the relation
\begin{equation}
\sum_{s=1}^r {\bf a}_{s} + \sum_{j=1}^J \bigg(\sum_{s=1}^r c_{js}\bigg){\bf a}_{r+j}  = \sum_{k=1}^K \bigg(\sum_{s=1}^r d_{ks}\bigg) {\bf a}_{r+J+k} + \sum_{s=1}^r {\bf a}_{n+s}. 
\end{equation}
The coefficient of each ${\bf a}_i$ is $>0$ since we assumed that no $C_j$ or $D_k$ is identically zero.  Let $v\in C(A)$ be the point represented by either side of~(2.6).  
Since every element of $A$ appears with $>0$ coefficient in~(2.6), the point $v$ cannot lie on any codimension-one face of $C(A)$.  In particular, the points $\{{\bf a}_i\}_{i=1}^{r+J}$ do not all lie on any codimension-one face of~$C(A)$.  This fact implies the lemma.
\end{proof}

\noindent{\bf Remark.}  By Lemma 2.5, the point $\sum_{i=1}^{r+J} {\bf a}_i$ is an interior lattice point of~$C(A)$.  The sum of its coordinates is $J+r$, hence it is an interior lattice point of the polytope $(J+r+1)\Delta(A)$.  Thus $(J+r)\Delta(A)$ is the largest integral multiple of $\Delta(A)$ whose interior can be lattice-point free.

\begin{proof}[Proof of Theorem 2.1]
Let $u=(u_1,\dots,u_n)$ be an interior lattice point of $C(A)$ for which $\sum_{i=1}^n u_i$ is minimal and write
\begin{equation}
(u_1,\dots,u_n) = \sum_{i=1}^N z_i{\bf a}_i
\end{equation}
with $z_i\geq 0$ for $i=1,\dots,N$.  By Lemma 2.4 we may assume that $z_i>0$ for $i=1,\dots,r+J$.  Note that since the coordinates of each ${\bf a}_i$ sum to~1 we have
\begin{equation}
\sum_{i=1}^n u_i = \sum_{i=1}^N z_i.
\end{equation}

We first claim that $z_i\leq 1$ for all $i$.  If some $z_{i_0}>1$, then
\begin{equation}
 u-{\bf a}_{i_0} = (z_{i_0}-1){\bf a}_ {i_0} + \sum_{\substack{i=1\\ i\neq i_0}}^N z_i{\bf a}_i 
\end{equation}
is an interior lattice point of $C(A)$ since every ${\bf a}_i$ with $>0$ coefficient in (2.7) occurs with $>0$ coefficient in (2.9).  But by (2.8) the sum of the coordinates of $u-{\bf a}_{i_0}$ is strictly smaller than the sum of the coordinates of $u$, contradicting the minimality property of $u$.

We claim that $z_i<1$ for $i=r+J+1,\dots,N$.  If $z_{i_0}=1$ for some $i_0\in\{r+J+1,\dots,N\}$, then (2.9) becomes
\[ u-{\bf a}_{i_0} = \sum_{\substack{i=1\\ i\neq i_0}}^N z_i{\bf a}_i. \]
But since $z_i>0$ for $i=1,\dots,r+J$, the point $u-{\bf a}_{i_0}$ is an interior lattice point of $C(A)$ by Lemma 2.5, again contradicting the minimality property of $u$.  

We have proved that in the representation (2.7) one has 
\begin{equation}
z_i\in(0,1]\quad\text{for $i=1,\dots,r+J$} 
\end{equation}
and
\begin{equation}
z_i\in [0,1)\quad\text{for $i=r+J+1,\dots,N$.} 
\end{equation}

We now examine (2.7) coordinatewise.  For $i=1,\dots,r$ we have
\begin{equation}
 u_i = z_i + z_{n+i}. 
\end{equation}
By (2.10) and (2.11) we have $z_i\in(0,1]$ and $z_{n+i}\in[0,1)$.  Since $u_i\in{\mathbb Z}$, Equation~(2.12) implies
\begin{equation}
u_i=1 \quad\text{for $i=1,\dots,r$}
\end{equation}
and
\begin{equation}
z_i=1-z_{n+i} \quad\text{for $i=1,\dots,r$.}
\end{equation}
For $j=1,\dots,J$ we have
\begin{equation}
 u_{r+j} = z_{r+j}+C_j(z_{n+1},\dots,z_{n+r}). 
\end{equation}
Since $u_{r+j}\in{\mathbb Z}$ and $z_{r+j}\in(0,1]$ we have
\begin{equation}
z_{r+j} = 1 + \big\lfloor C_j(z_{n+1},\dots,z_{n+r})\big\rfloor - C_j(z_{n+1},\dots,z_{n+r}) \quad\text{for $j=1,\dots,J$,}
\end{equation}
which implies by (2.15)
\begin{equation}
u_{r+j}= 1 + \big\lfloor C_j(z_{n+1},\dots,z_{n+r})\big\rfloor\quad\text{for $j=1,\dots, J$.}
\end{equation}
For $k=1,\dots,K$ we have
\begin{equation}
 u_{r+J+k} = z_{r+J+k} -D_k(z_{n+1},\dots,z_{n+r}). 
\end{equation}
Since $u_{r+J+k}\in{\mathbb Z}$ and $z_{r+J+k}\in[0,1)$ we have
\begin{equation}
z_{r+J+k} = D_k(z_{n+1},\dots,z_{n+r}) - \big\lfloor D_k(z_{n+1},\dots,z_{n+r})\big\rfloor\quad\text{for $k=1,\dots,K,$}
\end{equation}
which implies by (2.18)
\begin{equation}
u_{r+J+k} = - \big\lfloor D_k(z_{n+1},\dots,z_{n+r})\big\rfloor\quad\text{for $k=1,\dots,K$.}
\end{equation}

Adding (2.13), (2.17), and (2.20) gives
\begin{align}
\sum_{i=1}^n u_i &= r+J+\sum_{j=1}^J \big\lfloor C_j(z_{n+1},\dots,z_{n+r}) \big\rfloor - \sum_{k=1}^K
\big\lfloor D_k(z_{n+1},\dots,z_{n+r})\big\rfloor \\ \nonumber
 &= J+r+\Phi(z_{n+1},\dots,z_{n+r}).
\end{align}
We conclude that if $\Phi(x)\geq 0$ for all $x\in[0,1)^r$, then every interior lattice point $(u_1,\dots,u_n)$ of $C(A)$ satisfies (2.2).  This proves one direction of (a).

If $\Phi(x)\geq 1$ for all $x\in{\mathcal D}$, then the right-hand side of (2.21) takes its minimal value of $J+r$ for $(z_{n+1},\dots,z_{n+r})\in [0,1)^r\setminus {\mathcal D}$, where $\Phi$ vanishes.  But by the definition of ${\mathcal D}$, if
$(z_{n+1},\dots,z_{n+r})\in [0,1)^r\setminus {\mathcal D}$, then 
\[ C_j(z_{n+1},\dots,z_{n+r}) < 1\quad\text{for all $j$} \]
and
\[ D_k(z_{n+1},\dots,z_{n+r}) < 1\quad\text{for all $k$.} \]
Equations (2.13) and (2.17) then imply that $u_i=1$ for $i=1,\dots,r+J$ and (2.20) implies that $u_i=0$ for $i=r+J+1,\dots,r+J+K$, hence $(u_1,\dots,u_n) = \sum_{i=1}^{r+J}{\bf a}_i$.  This shows that $\sum_{i=1}^{r+J}{\bf a}_i$ is the unique interior lattice point of $C(A)$ satisfying $\sum_{i=1}^n u_i<J+r+1$.  This proves one direction of (b).

To prove the other directions of (a) and (b), let $z_{n+1},\dots z_{n+r}\in [0,1)$ and define $z_i$ for $i=1,\dots,n$ by (2.14), (2.16), and~(2.19).  This gives a sequence $z_1,\dots,z_N$ satisfying (2.10) and~(2.11).  Now define $u\in C(A)$ by (2.7).  By (2.10) and Lemma~2.5, the point $u$ is an interior point of $C(A)$.  Equation (2.7) implies that (2.12), (2.15), and (2.18) hold.  Our definitions of $z_1,\dots,z_N$ then imply that (2.13), (2.17), and (2.20) hold.  These show that $(u_1,\dots,u_n)$ is a lattice point and that (2.21) holds.  

If $\Phi(z_{n+1},\dots,z_{n+r})<0$, Eq.~(2.21) implies that (2.2) fails for $u$, proving the other direction of (a).  If $(z_{n+1},\dots,z_{n+r})\in{\mathcal D}$ and $\Phi(z_{n+1},\dots,z_{n+r})<1$, then Eq.~(2.21) implies that $u$ satisfies (2.3).  Furthermore, since $(z_{n+1},\dots,z_{n+r})\in{\mathcal D}$ we have either 
\[ C_j(z_{n+1},\dots,z_{n+r})\geq 1\quad\text{for some $j$}, \]
which implies by (2.17) that $u_{r+j}>1$, or
\[ D_k(z_{n+1},\dots,z_{n+r})\geq 1\quad\text{for some $k$}, \]
which implies by (2.19) that $u_{r+J+k}<0$.  In either case we have $u\neq\sum_{i=1}^{J+r}{\bf a}_i$, hence $\sum_{i=1}^{J+r}{\bf a}_i$ is not the only interior lattice point of $C(A)$ that satisfies (2.3).  This proves the other direction of (b).
\end{proof}

\begin{corollary}
If $K\leq J$, then the series (1.6) does not have integral coefficients.
\end{corollary}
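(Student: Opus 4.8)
The plan is to deduce the corollary from Theorem~1.7 by showing that if $K\le J$ then $\Phi$ takes a negative value on $[0,1)^r$. Write $\{y\}=y-\lfloor y\rfloor$ for the fractional part. Hypothesis (1.4) lets us cancel $\sum_jC_j=\sum_kD_k$ and gives the pointwise identity
\[
\Phi(x)=\sum_{k=1}^K\{D_k(x)\}-\sum_{j=1}^J\{C_j(x)\}.
\]
Each $C_j$ and each $D_k$ is a nonzero linear form with coefficients in $\mathbb{Z}_{\ge 0}$, so integrating first in a variable that occurs with a positive coefficient — and using that $t\mapsto\{at+c\}$ has mean $\tfrac12$ over a full period when $a\in\mathbb{Z}_{>0}$ — yields $\int_{[0,1)^r}\{C_j(x)\}\,dx=\int_{[0,1)^r}\{D_k(x)\}\,dx=\tfrac12$, hence $\int_{[0,1)^r}\Phi(x)\,dx=(K-J)/2$. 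If $K<J$ this integral is negative, so $\Phi$ is negative on a set of positive measure and, by Theorem~1.7, the series (1.6) is not integral.

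The remaining case $K=J$ is the crux, and it is where the hypothesis $C_j\ne D_k$ — unused up to now — must enter. Here $\int_{[0,1)^r}\Phi=0$, so if $\Phi\ge 0$ on $[0,1)^r$ then $\Phi=0$ almost everywhere, i.e.\ $\sum_j\{C_j(x)\}=\sum_k\{D_k(x)\}$ for almost every $x$. I would contradict this by comparing Fourier coefficients on $[0,1)^r$. From $\{t\}-\tfrac12=-\sum_{m\ge 1}\frac{\sin 2\pi mt}{\pi m}$ one sees that, for a linear form $L$ with integer coefficient vector $\lambda$, the $L^2$-function $\{L(x)\}-\tfrac12$ is supported in frequency on the nonzero integer multiples of $\lambda$, with Fourier coefficient $-1/(2\pi i m)$ at the frequency $m\lambda$. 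Grouping the coefficient vectors of the $C_j$ and of the $D_k$ by primitive direction $\rho$, written $e_j\rho$ and $f_k\rho$ respectively, equality of the two sides at each frequency $t\rho$ gives, for every $\rho$,
\[
\sum_{\substack{j:\ (c_{j1},\dots,c_{jr})\in\mathbb{Z}_{>0}\rho\\ e_j\mid t}}e_j
=\sum_{\substack{k:\ (d_{k1},\dots,d_{kr})\in\mathbb{Z}_{>0}\rho\\ f_k\mid t}}f_k
\qquad\text{for all }t\ge 1.
\]
An easy induction on $t$ (or M\"obius inversion) now forces, for each $\rho$, the multiset of the $e_j$ to equal the multiset of the $f_k$; letting $\rho$ vary, the multiset of coefficient vectors of the $C_j$ equals that of the $D_k$. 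Since $J=K\ge 1$, some $C_j$ then equals some $D_k$, contradicting our hypothesis. Hence $\Phi$ takes a negative value, and (1.6) is not integral.

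The main obstacle is exactly this equality case: the bound $\int_{[0,1)^r}\Phi=(K-J)/2$ is sharp when $K=J$, so integration alone does not suffice and one must rule out the degenerate possibility $\Phi\equiv 0$, for which the only leverage is the assumption $C_j\ne D_k$, used through the frequency-by-frequency comparison above. Alternatively one could argue on the cone side: by Theorem~2.1(a) it is enough to produce an interior lattice point of $C(A)$ with coordinate sum $<J+r$, and formula (2.21) shows such a point exists precisely when $\Phi$ assumes a negative value — so that reformulation leads back to the same analytic input.
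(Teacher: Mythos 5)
Your proof is correct, but it takes a genuinely different route from the paper's. For $K<J$ you integrate $\Phi$ over $[0,1)^r$, using the identity $\Phi(x)=\sum_k\{D_k(x)\}-\sum_j\{C_j(x)\}$ (valid by (1.4)) and the mean value $\tfrac12$ of each fractional part, to get $\int\Phi=(K-J)/2<0$; the paper instead exhibits the explicit interior point $\sum_{k=1}^K{\bf a}_{r+J+k}+\sum_{s=1}^r{\bf a}_{n+s}$ of $C(A)$, whose coordinate sum is $K+r<J+r$, and invokes Theorems~1.12(a) and~1.7. For the crucial case $K=J$ you argue that $\Phi\geq 0$ together with $\int\Phi=0$ forces $\Phi=0$ a.e., and then compare Fourier coefficients of $\sum_j\{C_j(x)\}$ and $\sum_k\{D_k(x)\}$ on the torus, frequency ray by frequency ray, to force the multisets of coefficient vectors to coincide, contradicting $C_j\neq D_k$; this works, the only point worth making explicit being that a frequency $t\rho$ with $t\geq 1$ and $\rho$ a primitive nonnegative vector can only receive contributions from forms whose coefficient vector is a positive multiple of $\rho$ (negative multiples of a nonnegative primitive vector are never positive multiples of another), which is immediate here. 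The paper instead interchanges the roles of the $C_j$ and $D_k$, uses a unimodular transformation to conclude that the reciprocal series (2.24) is also integral, hence $E(m)\equiv 1$, and then gives an elementary argument (a prime-selection induction for $r=1$, followed by a specialization and hyperplane-covering argument for general $r$) that equality of the factorial products forces $(C_j)$ to be a rearrangement of $(D_k)$. Your approach is shorter, needs only Landau's criterion (no cone or polytope machinery), and yields the quantitative identity $\int_{[0,1)^r}\Phi=(K-J)/2$; the paper's approach is purely combinatorial and arithmetic, stays within the lattice-point framework developed for Theorem~1.12, and proves along the way the stronger standalone fact that $\prod_j C_j(m)!=\prod_k D_k(m)!$ for all $m$ forces the two families of forms to coincide up to order.
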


\begin{proof}
Lemma 2.5 and Equation (2.6) imply that $\sum_{k=1}^K{\bf a}_{r+J+K} + \sum_{s=1}^r{\bf a}_{n+s}$ is an interior point of $C(A)$.  The sum of its coordinates is $K+r$, so it is an interior point of $(K+r+1)\Delta(A)$.  If $K<J$, this implies that $(J+r)\Delta(A)$ contains an interior lattice point so the assertion of the corollary follows from Theorems~1.12(a) and~1.7.

Suppose that $K=J$.  If (1.6) has integral coefficients, then by Theorems 1.7 and~1.12(a) the polytope $(J+r)\Delta(A)$ has no interior lattice points.  Interchange the roles of the $C_j$ and $D_k$: let
\begin{equation}
E'(m_1,\dots,m_r)=\frac{\prod_{k=1}^K D_k(m_1,\dots,m_r)!}{\prod_{j=1}^J C_j(m_1,\dots,m_r)!}
\end{equation}
for $m_1,\dots,m_r\in{\mathbb Z}_{\geq 0}$ and let
\begin{equation}
F'(t_1,\dots,t_r) = \sum_{m_1,\dots,m_r = 0}^\infty E'(m_1,\dots,m_r)t_1^{m_1}\cdots t_r^{m_r}.
\end{equation}

Let $A'\subseteq{\mathbb R}^n$ be the set of lattice points consisting of the standard unit basis vectors $\{{\bf a}_i\}_{i=1}^n$ and the vectors
\[ {\bf a}'_{n+s} = (0,\dots,0,1,0,\dots,0,d_{1s},\dots,d_{Ks},-c_{1s},\dots,-c_{Js}) \]
for $s=1,\dots,r$.  Let $\Delta(A')$ be the convex hull of $A'\cup\{{\bf 0}\}$.  The polytope $\Delta(A')$ is the image of the polytope $\Delta(A)$ under the linear transformation on ${\mathbb R}^n$ defined by $u_s\mapsto u_s$ for $s=1,\dots,r$, $u_{r+j}\mapsto -u_{r+K+j}$ for $j=1,\dots,J$, and $u_{r+J+k}\mapsto -u_{r+k}$ for $k=1,\dots,K$.  This is a unimodular transformation, so it preserves combinatorial properties.  In particular, the polytope $(K+r)\Delta(A')$ contains no interior lattice points since $K=J$.  Theorems 1.7 and 1.12(a) then imply that the series (2.24) has integral coefficients.  But these coefficients are the reciprocals of the coefficients of~(1.6), which we are assuming has integral coefficients, so all these coefficients must equal~1.  We therefore have
\begin{equation}
\prod_{j=1}^J C_j(m_1,\dots,m_r)! = \prod_{k=1}^K D_k(m_1,\dots,m_r)!
\end{equation}
for all $m_1,\dots,m_r\in{\mathbb Z}_{\geq 0}$.  

We claim that if (2.25) holds, then $(C_j)_{j=1}^J$ is a rearrangement of $(D_k)_{k=1}^K$, contradicting our hypothesis that $C_j\neq D_k$ for all $j$ and $k$.  
We consider first the case $r=1$ and proceed by induction on $J$.  The case $J=1$ is trivial, so suppose that $J>1$ and
\begin{equation}
\prod_{j=1}^J (c_{j1}m_1)! = \prod_{k=1}^K (d_{k1}m_1)!
\end{equation}
for all nonnegative integers $m_1$.  Assume the coefficients are ordered so that
\[ c_{11} = \max_{j=1,\dots,J}\{c_{j1}\} \quad\text{and}\quad d_{11} = \max_{k=1,\dots,K}\{d_{k1}\}. \]
Suppose that $c_{11}\neq d_{11}$, say, $c_{11}>d_{11}$.  We cannot have $d_{11}=1$.  For then $d_{k1} =1$ for all $k$, but $c_{11}>1$, $c_{j1}>0$ for all $j$ (by hypothesis), and $J=K$ make it impossible to satisfy Hypothesis~(1.4).  

Choose a prime number $\pi>d_{11}^2 + d_{11}$ and write
$\pi = qd_{11}+r$ with $0<r<d_{11}$ (we have $r>0$ since $d_{11}\neq 1$ and $\pi$ is prime).  Then $q>d_{11}$ since $\pi>d_{11}^2 + d_{11}$, so
\[ qc_{11}\geq q(d_{11}+1) = qd_{11} + q >qd_{11}+d_{11}>qd_{11} + r =\pi >qd_{11}. \]
If we take $m_1=q$ in (2.26), then the prime number $\pi$ divides the left-hand side of (2.26) but not the right-hand side, a contradiction.  Therefore $c_{11} = d_{11}$, so these factors may be canceled from (2.26) and the claim follows by induction on~$J$.

Consider now the case of general $r$, where we again proceed by induction on $J$.  For $J=1$ we have
\begin{equation}
(c_{11}m_1 + \cdots + c_{1r}m_r)! = (d_{11}m_1 + \cdots + d_{1r}m_r)!
\end{equation}
for all nonegative integers $m_1,\dots,m_r$.  This implies that the linear forms $C_1$ and $D_1$ assume equal values at all points of $\big({\mathbb Z}_{\geq 0}\big)^r$, hence $C_1=D_1$.  Now suppose that $J>1$.  Fix nonnegative integers $m_1,\dots,m_r$.  Equation (2.25) with $m_s$ replaced by $m_s\cdot m$ for all $s$ gives
\begin{equation}
\prod_{j=1}^J \big(C_j(m_1,\dots,m_r)m\big)! = \prod_{k=1}^K \big(D_k(m_1,\dots,m_r)m\big)!
\end{equation}
for all nonnegative integers $m$.  The case $r=1$ now implies that the integers $C_j(m_1,\dots,m_r)$ are a rearrangement of the integers $D_k(m_1,\dots,m_r)$.  Take $j=1$ and for $k=1,\dots,K$ define
\[ {\mathcal M}_k = \{ (m_1,\dots,m_r)\in\big({\mathbb Z}_{\geq 0}\big)^r\mid C_1(m_1,\dots,m_r) = D_k(m_1,\dots,m_r)\}. \]
Since $\big({\mathbb Z}_{\geq 0}\big)^r = \bigcup_{k=1}^K {\mathcal M}_k$, at least one of the sets ${\mathcal M}_k$ is not contained in a hyperplane.  For such a $k$ we have $C_1 = D_k$, so we may cancel these factors from (2.25) and we are done by induction on $J$.
\end{proof}

{\bf Remark.}  It is straightforward to check that
\[ \max_j\{c_{js}\}\geq \max_k\{d_{ks}\}\quad\text{for $s=1,\dots,r$} \]
is a necessary condition for the integrality of all $E(m)$.

\section{$A$-hypergeometric series}

In this section we describe the $A$-hypergeometric nature of the series~(1.6).  We begin by recalling the definition of the $A$-hypergeometric system of differential equations.

Let ${L}\subseteq{\mathbb Z}^N$ be the lattice of relations on $A$:
\[ {L} = \bigg\{ l=(l_1,\dots,l_N)\in{\mathbb Z}^N\:\bigg|\: \sum_{i=1}^N l_i{\bf a}_i = {\bf 0} \bigg\}. \]
Let $\beta = (\beta_1,\dots,\beta_n)\in{\mathbb C}^n$.  The {\it $A$-hypergeometric system with parameter $\beta$\/} is the system of partial differential operators in $\lambda_1,\dots,\lambda_N$ consisting of the {\it box operators\/}
\begin{equation}
\Box_l = \prod_{l_i>0}\bigg(\frac{\partial}{\partial\lambda_i}\bigg)^{l_i} - \prod_{l_i<0}\bigg(\frac{\partial}{\partial \lambda_i}\bigg)^{-l_i}\quad\text{for $l\in {L}$}
\end{equation}
and the {\it Euler\/} or {\it homogeneity operators}
\begin{equation}
Z_i = \sum_{j=1}^N a_{ij}\lambda_j\frac{\partial}{\partial \lambda_j} -\beta_i \quad\text{for $i=1,\dots,n$,}
\end{equation}
where ${\bf a}_j = (a_{1j},\dots,a_{nj})$.  

To describe solutions of these systems, it will be convenient to replace the classical Pochhammer symbol with a different notation.  Define for $z\in{\mathbb C}$ and $k\in {\mathbb Z}$, $k<-z$ if $z\in{\mathbb Z}_{<0}$,
\[ [z]_k = \begin{cases} 1 & \text{if $k=0$,} \\ \displaystyle\frac{1}{(z+1)(z+2)\cdots(z+k)} & \text{if $k>0$,} \\
z(z-1)\cdots(z+k+1) & \text{if $k<0$.} \end{cases} \]

For $z=(z_1,\dots,z_N)\in{\mathbb C}^N$ and $k=(k_1,\dots,k_N)\in{\mathbb Z}^N$ we define
\[ [z]_k = \prod_{i=1}^N [z_i]_{k_i}. \]
The {\it negative support\/} of $z$ is the set
\[ {\rm nsupp}(z) = \{i\in \{1,\dots,N\}\mid \text{$z_i$ is a negative integer}\}. \]

Let $v=(v_1,\dots,v_N)\in{\mathbb C}^N$ satisfy $\sum_{i=1}^N v_i{\bf a}_i = \beta$.  One says that $v$ has {\it minimal negative support\/} if there is no $l\in {L}$ for which ${\rm nsupp}(v+l)$ is a proper subset of ${\rm nsupp}(v)$.  Let
\[ {L}_v = \{l\in {L}\mid {\rm nsupp}(v+l) = {\rm nsupp}(v)\} \]
and let
\begin{equation}
F_v(\lambda) = \sum_{l\in {L}_v} [v]_l\lambda^{v+l}.
\end{equation}
By \cite[Proposition 3.4.13]{SST}, the series $F_v(\lambda)$ is a solution of the $A$-hypergeometric system (3.1), (3.2) if and only if $v$ has minimal negative support.

For the set $A$ defined in Section 1, we give an explicit description of ${L}$.  Let $l=(l_1,\dots,l_N)\in {L}$.  Recall that $n=r+J+K$ and $N=n+r$.  We have from the definition of the~${\bf a}_i$
\begin{multline}
{\bf 0} = \sum_{i=1}^N l_i{\bf a}_i = \big(l_1+l_{n+1},\dots,l_r+l_{n+r},\\
l_{r+1}+C_1(l_{n+1},\dots,l_{n+r}),\dots,l_{r+J}+C_J(l_{n+1},\dots,l_{n+r}),\\
l_{r+J+1}-D_1(l_{n+1},\dots,l_{n+r}),\dots,l_{r+J+K}-D_K(l_{n+1},\dots,l_{n+r})\big).
\end{multline}
For notational convenience, set $p_s=l_{n+s}$ for $s=1,\dots,r$ and put $p=(p_1,\dots,p_r)$.  Since every coordinate on the right-hand side of (3.4) must vanish, we get
\begin{multline}
{L}=\{ l=(-p_1,\dots,-p_r,-C_1(p),\dots,-C_J(p),\\ 
D_1(p),\dots,D_K(p),p_1,\dots,p_r)\mid p=(p_1,\dots,p_r)\in{\mathbb Z}^r\}.
\end{multline}

We now explain how to choose $v$ and $\beta$ so that the series (3.3) becomes the series~(1.6).  Put
\[ v^{(0)} = (-1,\dots,-1,0,\dots,0) \]
($-1$ repeated $r+J$ times, $0$ repeated $K+r$ times).  This gives
\[ \beta^{(0)} = \sum_{i=1}^{r+J} (-1){\bf a}_i = (-1,\dots,-1,0,\dots,0) \]
($-1$ repeated $r+J$ times, $0$ repeated $K$ times).  For $l\in {L}$ as given in~(3.5), we see that ${\rm nsupp}(v^{(0)}) = {\rm nsupp}(v^{(0)}+l)$ if and only if 
\[ p_1,\dots,p_r,C_1(p),\dots,C_J(p),D_1(p),\dots,D_K(p)\geq 0. \]
Since the $C_j$ and $D_k$ have nonnegative coefficients, this is equivalent to assuming $p_1,\dots,p_r\geq 0$.  Thus
\begin{multline*}
{L}_{v^{(0)}} = \{ l=(-p_1,\dots,-p_r,-C_1(p),\dots,-C_J(p),\\ 
D_1(p),\dots,D_K(p),p_1,\dots,p_r)\mid p=(p_1,\dots,p_r)\in\big({\mathbb Z}_{\geq 0}\big)^r\}
\end{multline*}
and the series (3.3) becomes
\begin{multline*}
F_{v^{(0)}}(\lambda) = (\lambda_1\cdots\lambda_{r+J})^{-1}\cdot \\
\sum_{p_1,\dots,p_r=0}^\infty  \bigg(\prod_{s=1}^r [-1]_{-p_s} \prod_{j=1}^J [-1]_{-C_j(p)}\prod_{k=1}^K [0]_{D_k(p)}\prod_{s=1}^r [0]_{p_s}\bigg) \lambda^l. 
\end{multline*}
Using (3.5) and the definition of the symbol $[z]_k$, this simplifies to
\begin{multline}
F_{v^{(0)}}(\lambda) = (\lambda_1\cdots\lambda_{r+J})^{-1}\cdot \\
\sum_{p_1,\dots,p_r=0}^\infty (-1)^{\sum_{s=1}^r p_s + \sum_{j=1}^J C_j(p)} \frac{\displaystyle\prod_{j=1}^J C_j(p)!}{\displaystyle\prod_{k=1}^K D_k(p)!} \frac{\displaystyle\prod_{k=1}^K \lambda_{r+J+k}^{D_k(p)}\prod_{s=1}^r \lambda_{n+s}^{p_s}}{\displaystyle\prod_{s=1}^r \lambda_s^{p_s}\prod_{j=1}^J \lambda_{r+j}^{C_j(p)}}.
\end{multline}
If we make the specializations $\lambda_i\mapsto 1$ for $i=1,\dots,r+J$, $\lambda_i\mapsto -1$ for $i=r+J+1,\dots,r+J+K$, and $\lambda_{n+s}\mapsto -t_s$ for $s=1,\dots,r$, then using (1.3) this becomes the series~(1.6).

As noted above, the series $F_{v^{(0)}}(\lambda)$ will be a solution of the $A$-hypergeometric system with parameter $\beta^{(0)}$ if and only if $v^{(0)}$ has minimal negative support.  From (3.5) we see that $v^{(0)}$ will {\it fail\/} to have minimal negative support if and only if there exists $p=(p_1,\dots,p_r)\in\big({\mathbb Z}_{\geq 0}\big)^r$ such that $D_k(p)\geq0$ for $k=1,\dots,K$ and $C_j(p)<0$ for some $j\in\{1,\dots,J\}$.  But since all $C_j$ have nonnegative coefficients, this cannot occur.
\begin{proposition}
The series $F_{v^{(0)}}(\lambda)$ given by (3.6) satisfies the $A$-hypergeometric system (3.1), (3.2) with parameter $\beta^{(0)}$.
\end{proposition}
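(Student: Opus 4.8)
The plan is to deduce the Proposition from the criterion of Saito--Sturmfels--Takayama recalled above: by \cite[Proposition 3.4.13]{SST}, the series $F_v(\lambda)$ of (3.3) solves the $A$-hypergeometric system (3.1), (3.2) if and only if $v$ has minimal negative support. So the entire argument reduces to verifying that $v^{(0)}=(-1,\dots,-1,0,\dots,0)$ has minimal negative support; this is precisely the content of the paragraph immediately preceding the statement, and the proof consists of writing that observation out carefully.

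Concretely, I would first record that $\mathrm{nsupp}(v^{(0)})=\{1,\dots,r+J\}$, since the first $r+J$ coordinates of $v^{(0)}$ equal $-1$ and the last $K+r$ coordinates equal $0$. Next, given an arbitrary $l\in L$, I would write it in the form (3.5) in terms of $p=(p_1,\dots,p_r)\in{\mathbb Z}^r$ and compute $\mathrm{nsupp}(v^{(0)}+l)$ position by position from the entries $-1-p_s$ (positions $1,\dots,r$), $-1-C_j(p)$ (positions $r+1,\dots,r+J$), $D_k(p)$ (positions $r+J+1,\dots,r+J+K$), and $p_s$ (positions $n+1,\dots,n+r$): position $s$ lies in the negative support iff $p_s\geq 0$, position $r+j$ iff $C_j(p)\geq 0$, position $r+J+k$ iff $D_k(p)<0$, and position $n+s$ iff $p_s<0$.

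From this description I would argue that $\mathrm{nsupp}(v^{(0)}+l)$ can never be a proper subset of $\{1,\dots,r+J\}$: if it is contained in $\{1,\dots,r+J\}$, then no position $n+s$ occurs, forcing $p_s\geq 0$ for every $s$; but then, since the forms $C_j$ have nonnegative coefficients, $C_j(p)\geq 0$ for every $j$, so every position $r+j$ lies in the negative support, and likewise every position $s\leq r$ does, giving $\mathrm{nsupp}(v^{(0)}+l)=\{1,\dots,r+J\}$ exactly. Hence no $l\in L$ shrinks the negative support, so $v^{(0)}$ has minimal negative support and the Proposition follows.

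The argument involves no genuine difficulty; the only thing to be careful about is the bookkeeping in the coordinatewise description of $\mathrm{nsupp}(v^{(0)}+l)$ and the handling of the quantifiers implicit in ``minimal negative support'' (proper subset versus equality). The nonnegativity of the coefficients $c_{js}$ is doing all of the work, exactly as in the remark that precedes the statement.
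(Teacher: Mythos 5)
Your proposal is correct and follows essentially the same route as the paper: reduce to the criterion of \cite[Proposition 3.4.13]{SST} and verify that $v^{(0)}$ has minimal negative support by using the explicit form (3.5) of $L$ together with the nonnegativity of the coefficients of the $C_j$. The coordinatewise bookkeeping you carry out is exactly the computation the paper compresses into the paragraph preceding the Proposition.
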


{\bf Remark.}  It follows from \cite[Proposition 5.2]{AS} that if $(J+r)\Delta(A)$ contains no interior lattice points, then $F_{v^{(0)}}(\lambda)$ has integral coefficients.  It was this earlier result that motivated our current study.  

\section{Logarithmic solutions}

In this section we show that the series (1.9) and (1.10) are specializations of logarithmic solutions of the $A$-hyper\-geometric system with parameter $\beta^{(0)}$.  We begin by reviewing some results from \cite{AS2}.

For $i=1,\dots,N$ and $v=(v_1,\dots,v_N)\in{\mathbb C}^N$, we define the {\it $\hat{\imath}$-negative support\/} of~$v$ to be
\[ \text{$\hat{\imath}$-}{\rm nsupp}(v) = \{ j\in\{1,\dots,\hat{\imath},\dots,N\}\mid\text{$v_j$ is a negative integer,}\} \]
where the symbol `$\hat{\imath}$' indicates that the element $i$ has been omitted from the set.  We say that $v$ has {\it minimal $\hat{\imath}$-negative support\/} if $\text{$\hat{\imath}$-}{\rm nsupp}(v+l)$ is not a proper subset of $\text{$\hat{\imath}$-}{\rm nsupp}(v)$ for any $l\in {L}$.  Let
\[ {L}_{v,\hat{\imath}} = \{ l\in {L}\mid \text{$\hat{\imath}$-}{\rm nsupp}(v+l)=\text{$\hat{\imath}$-}{\rm nsupp}(v)\}. \]
Note that ${L}_v\subseteq {L}_{v,\hat{\imath}}$ for all $i$.  

Using (3.5), one checks that the vector $v^{(0)}$ of Section 3 has minimal $\hat{\imath}$-negative support for $i=1,\dots,n$ and that ${L}_{v^{(0)},\hat{i}} = {L}_{v^{(0)}}$ for these $i$.  We can then apply \cite[Theorem~4.11]{AS2} to find series $G_i(\lambda)$ for $i=1,\dots,n$ such that the expressions $F_{v^{(0)}}(\lambda)\log\lambda_i + G_i(\lambda)$ are what we call {\it quasi-solutions\/} of the $A$-hypergeometric system with parameter $\beta^{(0)}$:  $F_{v^{(0)}}(\lambda)$ is a solution of the $A$-hypergeometric system (3.1), (3.2) with parameter $\beta^{(0)}$, $G_i(\lambda)$ satisfies the Euler operators (3.2) with parameter $\beta^{(0)}$, and $F_{v^{(0)}}(\lambda)\log\lambda_i + G_i(\lambda)$ satisfies the box operators~(3.1).

The formula of \cite[Theorem~4.11]{AS2} gives explicit expressions for these $G_i$.   We record them here for $i=r+1,\dots,r+J+K$, for which the $G_i$ are related to the series (1.9) and (1.10).  For $i=r+j_0$ with $1\leq j_0\leq J$ we have 
\begin{multline}
G_{r+j_0}(\lambda) = -(\lambda_1\cdots\lambda_{r+J})^{-1} \sum_{\substack{p_1,\dots,p_r=0\\ C_{j_0}(p)\neq 0}}^\infty 
 (-1)^{\sum_{s=1}^r p_s + \sum_{j=1}^J C_j(p)}  \\ \cdot H_{C_{j_0}(p)}
\frac{\displaystyle\prod_{j=1}^J C_j(p)!}{\displaystyle\prod_{k=1}^K D_k(p)!} \frac{\displaystyle\prod_{k=1}^K \lambda_{r+J+k}^{D_k(p)}\prod_{s=1}^r \lambda_{n+s}^{p_s}}{\displaystyle\prod_{s=1}^r \lambda_s^{p_s}\prod_{j=1}^J \lambda_{r+j}^{C_j(p)}}
\end{multline}
and for $i=r+J+k_0$ with $1\leq k_0\leq K$ we have
\begin{multline}
G_{r+J+k_0}(\lambda) = -(\lambda_1\cdots\lambda_{r+J})^{-1} \sum_{\substack{p_1,\dots,p_r=0\\ D_{k_0}(p)\neq 0}}^\infty (-1)^{\sum_{s=1}^r p_s + \sum_{j=1}^J C_j(p)}  \\ \cdot H_{D_{k_0}(p)}
\frac{\displaystyle\prod_{j=1}^J C_j(p)!}{\displaystyle\prod_{k=1}^K D_k(p)!} \frac{\displaystyle\prod_{k=1}^K \lambda_{r+J+k}^{D_k(p)}\prod_{s=1}^r \lambda_{n+s}^{p_s}}{\displaystyle\prod_{s=1}^r \lambda_s^{p_s}\prod_{j=1}^J \lambda_{r+j}^{C_j(p)}}
\end{multline}
The specialization of Section 3 that led from $F_{v^{(0)}}(\lambda)$ to the series (1.6) turns the series (4.1) and (4.2) into (1.9) and (1.10), respectively.

In general, $v^{(0)}$ does not have minimal $\hat{\imath}$-negative support for $i=n+1,\dots,n+r$.  It does, however, if we impose the condition of Theorem~1.12(b).
\begin{proposition}  If $\sum_{i=1}^{J+r}{\bf a}_i$ is the unique interior lattice point of $(J+r+1)\Delta(A)$, then $v^{(0)}$ has minimal $\hat{\imath}$-negative support for $i=n+1,\dots,n+r$.  
\end{proposition}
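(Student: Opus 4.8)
The plan is to make the condition ``$v^{(0)}$ has minimal $\hat{\imath}$-negative support'' completely explicit using the description (3.5) of the lattice $L$, and then to contradict it, under the negation of the hypothesis, by invoking the reformulation of that hypothesis in terms of $\Phi$ and ${\mathcal D}$ given in Theorem~1.12(b).

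First I would fix $i=n+s_0$ with $1\le s_0\le r$ and write a general $l\in L$ as in (3.5) in terms of $p=(p_1,\dots,p_r)\in{\mathbb Z}^r$. Then the coordinates of $v^{(0)}+l$ are: $-1-p_s$ in position $s$ ($1\le s\le r$), $-1-C_j(p)$ in position $r+j$, $D_k(p)$ in position $r+J+k$, and $p_s$ in position $n+s$. Since $n+s_0\notin{\rm nsupp}(v^{(0)})$ we have $\hat{\imath}\text{-}{\rm nsupp}(v^{(0)})=\{1,\dots,r+J\}$, and a short case analysis shows that $v^{(0)}$ \emph{fails} to have minimal $\hat{\imath}$-negative support exactly when there is a ``witness'' $p\in{\mathbb Z}^r$ with $p_s\ge 0$ for $s\ne s_0$, $p_{s_0}\le -1$, and $D_k(p)\ge 0$ for all $k$. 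The constraint $p_{s_0}\le -1$ is forced: if all $p_s\ge 0$ then all $C_j(p)\ge 0$ and $\hat{\imath}\text{-}{\rm nsupp}(v^{(0)}+l)$ is all of $\{1,\dots,r+J\}$, whereas if $p_{s_0}<0$ then position $s_0$ automatically drops out of $\hat{\imath}\text{-}{\rm nsupp}(v^{(0)}+l)$, making it a proper subset regardless of the $C_j(p)$.

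Second, assuming such a witness $p$ exists, I would produce a point of ${\mathcal D}$ at which $\Phi\le 0$, contradicting Theorem~1.12(b). Put $q=-p_{s_0}\ge 1$, and for small $\epsilon>0$ set $x_{s_0}=1-q\epsilon$ and $x_s=\epsilon p_s$ for $s\ne s_0$, so that $x\in[0,1)^r$ once $\epsilon$ is small. The virtue of this choice is the identity $C_j(x)=c_{js_0}+\epsilon C_j(p)$ and $D_k(x)=d_{ks_0}+\epsilon D_k(p)$ for all $j,k$, which follows by direct substitution using $c_{js_0}p_{s_0}=-qc_{js_0}$. For $\epsilon$ small enough one then has $\lfloor D_k(x)\rfloor=d_{ks_0}$ (using $D_k(p)\ge 0$), while $\lfloor C_j(x)\rfloor$ equals $c_{js_0}$ when $C_j(p)\ge 0$ and equals $c_{js_0}-1$ when $C_j(p)<0$; here one notes that $C_j(p)<0$ forces $c_{js_0}\ge 1$, since the $C_j$ have nonnegative coefficients and $s_0$ is the only index at which $p$ is negative. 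Summing and using (1.4), which gives $\sum_j c_{js_0}=\sum_k d_{ks_0}$, yields $\Phi(x)=-\#\{j: C_j(p)<0\}\le 0$. Finally $x\in{\mathcal D}$: if every $d_{ks_0}$ were $0$, then (1.4) would force every $c_{js_0}$ to be $0$ as well, contradicting the standing hypothesis that $x_{s_0}$ occurs in some $C_j$ or $D_k$; so some $d_{k_0s_0}\ge 1$, whence $D_{k_0}(x)=d_{k_0s_0}+\epsilon D_{k_0}(p)\ge 1$. This contradicts $\Phi(x)\ge 1$ on ${\mathcal D}$, so no witness exists, which is the assertion of the proposition.

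The step I expect to be the main obstacle is the first one: pinning down exactly which coordinates of $v^{(0)}+l$ can be negative integers, and verifying that ``$\hat{\imath}\text{-}{\rm nsupp}(v^{(0)}+l)\subsetneq\{1,\dots,r+J\}$'' is equivalent to the clean inequalities on $p$ — in particular that the freedom to delete the single index $n+s_0$ translates precisely into allowing $p_{s_0}$ to be negative while the conditions $p_s\ge 0$ ($s\ne s_0$) and $D_k(p)\ge 0$ are unaffected. Once that dictionary is established, the choice of $x$ and the floor computation in the second step are routine, the only subtlety being the observation that $C_j(p)<0$ cannot occur unless $c_{js_0}\ge 1$.
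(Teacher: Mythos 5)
Your proof is correct, but the second half takes a genuinely different route from the paper. Your first step (translating failure of minimal $\hat{\imath}$-negative support for $i=n+s_0$ into the existence of a witness $p\in{\mathbb Z}^r$ with $p_{s_0}\leq -1$, $p_s\geq 0$ for $s\neq s_0$, and $D_k(p)\geq 0$ for all $k$) is exactly the paper's first step, and your case analysis there is sound. Where you diverge is in deriving the contradiction: you invoke Theorem~1.12(b) to convert the hypothesis into ``$\Phi(x)\geq 1$ on ${\mathcal D}$'' and then build, from the witness, an explicit point $x\in{\mathcal D}$ with $\Phi(x)\leq 0$ via the substitution $x_{s_0}=1-q\epsilon$, $x_s=\epsilon p_s$; your floor computations, the observation that $C_j(p)<0$ forces $c_{js_0}\geq 1$, and the use of (1.4) to see $x\in{\mathcal D}$ all check out, and there is no circularity since Theorem~1.12 is established before Section~4. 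The paper instead stays entirely in the lattice-point language of the hypothesis: from the witness it forms the relation (4.4), notes that by uniqueness the lattice point ${\bf a}_{n+s_0}+\sum_{i\neq s_0,\,i\leq J+r}{\bf a}_i$ (coordinate sum $J+r$) must lie on a codimension-one face $\sigma$ of $C(A)$, and then a supporting-hyperplane argument forces ${\bf a}_{s_0}$ onto $\sigma$, contradicting the interiority of $\sum_{i=1}^{J+r}{\bf a}_i$. Your approach is more computational and essentially re-runs, in miniature, the correspondence between interior lattice points and values of $\Phi$ from the proof of Theorem~2.1, which makes it longer but very concrete; the paper's argument is shorter, purely geometric, and uses the hypothesis in its native polytope form without passing through the $\Phi$-criterion.
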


\begin{proof}
To fix ideas, take $i=n+1$.  Let $l\in {L}$ be as given in (3.5).  If $p_1\geq 0$, then $(\widehat{n+1})$-${\rm nsupp}(v^{(0)}+l)$ cannot be a proper subset of $(\widehat{n+1})$-${\rm nsupp}(v^{(0)})$.  So suppose $p_1<0$.  Then 
$(\widehat{n+1})$-${\rm nsupp}(v^{(0)}+l)$ will be a proper subset of $(\widehat{n+1})$-${\rm nsupp}(v^{(0)})$ if and only if $p_2,\dots,p_r\geq 0$ and $D_k(p)\geq 0$ for $k=1,\dots,K$.  

Since $l\in {L}$ we have the equation
\[ \sum_{s=1}^r p_s{\bf a}_s + \sum_{j=1}^J C_j(p){\bf a}_{r+j} = \sum_{k=1}^K D_k(p){\bf a}_{r+J+k}+ \sum_{s=1}^r p_s{\bf a}_{n+s}, \]
which we rewrite in the form
\begin{equation}
-p_1{\bf a}_{n+1} +\sum_{s=2}^r p_s{\bf a}_s + \sum_{j=1}^J C_j(p){\bf a}_{r+j} = -p_1{\bf a}_1 + \sum_{k=1}^K D_k(p){\bf a}_{r+J+k}+ \sum_{s=2}^r p_s{\bf a}_{n+s}
\end{equation}
so that all coefficients on the right-hand side are nonnegative and $-p_1>0$.  

By the uniqueness of $\sum_{i=1}^{J+r}{\bf a}_i$, the lattice point ${\bf a}_{n+1} + \sum_{i=2}^{J+r}{\bf a}_i$ 
must lie on a codimension-one face $\sigma$ of $C(A)$.  This implies that each of the lattice points ${\bf a}_{n+1},{\bf a}_2,\dots,{\bf a}_{J+r}$ lies on $\sigma$.  It follows that the left-hand side of (4.4) lies on the hyperplane containing $\sigma$.  The right-hand side of (4.4), where all coefficients are nonnegative, then lies on $\sigma$, and, since the coefficient $-p_1$ of ${\bf a}_1$ is $>0$, it follows that ${\bf a}_1$ lies on $\sigma$.  But then $\sum_{i=1}^{J+r}{\bf a}_i$ lies on $\sigma$, contradicting the fact that it is an interior point of $(J+r+1)\Delta(A)$.
\end{proof}

\end{document}